\documentclass[a4paper,12pt]{article}

\usepackage{parskip}
\usepackage{fullpage}
\usepackage{abstract}
\usepackage{cancel}

\usepackage{titlesec}
\def\sectionfont{\sffamily\Large\bfseries\boldmath}
\def\subsectionfont{\sffamily\large\bfseries\boldmath}
\def\paragraphfont{\sffamily\normalsize\bfseries\boldmath}
\titleformat*{\section}{\sectionfont}
\titleformat*{\subsection}{\subsectionfont}
\titleformat*{\subsubsection}{\paragraphfont}
\titleformat*{\paragraph}{\paragraphfont}
\titleformat*{\subparagraph}{\paragraphfont}

\usepackage{amsmath,amssymb,mathtools,amsfonts}
\usepackage[english]{babel}
\usepackage[titletoc,title]{appendix}
\usepackage{enumitem}
\setlist{nolistsep}
\usepackage{hyperref}
\hypersetup{%
  colorlinks=true,
  linktoc=all,
  linkcolor=black,
  citecolor= black,
  urlcolor=blue
}

\title{\bfseries\sffamily
  Homogeneous Self-Dual Embedding via Perspective Functions
}

\author{%
  Goran Banjac \\
  \href{mailto:goranbanjac1989@gmail.com}{\texttt{goranbanjac1989@gmail.com}}
}

\usepackage{amsthm}

\newtheoremstyle{exampstyle}
  {.5\baselineskip}
  {\topsep}
  {}
  {}
  {\bfseries}
  {.}
  {.5em}
  {}
\theoremstyle{exampstyle}

\newtheorem{theorem}{Theorem}[section]
\newtheorem{corollary}[theorem]{Corollary}

\newtheorem{lemma}[theorem]{Lemma}
\newtheorem{proposition}[theorem]{Proposition}
\newtheorem{remark}[theorem]{Remark}

\newcommand{\eg}{\emph{e.g.},\ }
\newcommand{\ie}{\emph{i.e.},\ }

\newcommand{\mcf}{\mathcal}

\newcommand{\mbb}{\mathbb}

\renewcommand{\Re}{\mbb{R}}
\newcommand{\Nat}{\mbb{N}}
\newcommand{\symm}{\mbb{S}}

\newcommand{\eqdef}{\coloneqq}

\newcommand{\Alg}{Alg.}
\newcommand{\Prop}{Prop.}

\newcommand{\Thm}{Thm.}
\newcommand{\Lem}{Lem.}

\newcommand{\App}{App.}

\DeclareMathOperator*{\argmin}{\operatorname{argmin}}

\DeclareMathOperator{\dom}{dom}

\DeclareMathOperator{\prox}{Prox}
\DeclareMathOperator{\range}{ran}

\DeclareMathOperator{\epi}{epi}

\newcommand{\eps}{\varepsilon}

\newcommand{\indicator}[1]{\iota_{#1}}
\newcommand{\project}[1]{P_{#1}}

\newcommand{\polar}[1]{{#1}^\ominus}
\newcommand{\dual}[1]{{#1}^\oplus}

\newcommand{\recession}[1]{{\rm rec}\,{#1}}
\newcommand{\support}[1]{\sigma_{#1}}
\newcommand{\closure}[1]{\overline{#1}\,}
\newcommand{\inv}{^{-1}}
\newcommand{\norm}[1]{\lVert#1\rVert}
\newcommand{\half}{\tfrac{1}{2}}
\newcommand{\innerprod}[2]{\left\langle{#1}\mid{#2}\right\rangle}

\newcommand{\seq}[1]{({#1})_{k\in\Nat}}
\newcommand{\setB}{B}
\newcommand{\setC}{C}
\newcommand{\setD}{D}

\usepackage{algorithm}
\usepackage[noend]{algpseudocode}

\newcounter{algorithmctr}[section]
\renewcommand{\thealgorithmctr}{\thesection.\arabic{algorithmctr}}

\usepackage{tikz}
\usepackage{pgfplots}
\pgfplotsset{compat=newest} 

\begin{document}

\maketitle

\begin{abstract}
  We present a generalization of the well-known homogeneous self-dual embedding model, which is widely used in conic optimization.
  The new embedding applies to a problem of minimizing the sum of two proper lower-semicontinuous convex functions and can be represented as a single inequality that uses perspectives of these functions and of their conjugates.
  A solution to the proposed embedding encodes a primal-dual solution to the original problem when available, or an infeasibility certificate otherwise.
  We then use the Douglas-Rachford algorithm to find a solution to the embedding and discuss its efficient implementation by exploiting the problem structure.
  The resulting algorithm recovers an existing method for solving quadratic cone programs as a special case.
  We demonstrate the generality and effectiveness of the algorithm on a class of convex optimization problems with non-smooth objective function and non-conic constraints.
\end{abstract}

\section{Introduction}\label{sec:intro}

Solving a convex optimization problem by first reducing it to an equivalent \emph{linear cone program (LCP)}, \ie a problem of minimizing a linear objective function subject to a conic constraint, has been a standard strategy for several decades.
There are multiple reasons for using this canonical form.
First, it is due to its expressive power, \ie the ability to reduce a wide range of convex optimization problems to an LCP using only a handful of cone types.
This \emph{canonicalization} procedure is easy to automate and is implemented in optimization parsers such as CVXPY \cite{Diamond:2016} and JuMP \cite{Dunning:2017}.
Having such a canonical form also simplifies the development of general-purpose numerical solvers.
Second, it is due to the development of interior-point methods for LCPs \cite{Nesterov:1994,Nemirovski:2008}, which are the methods of choice for solving small and medium-size problem instances.
Third, other algorithmic advances such as the \emph{homogeneous self-dual embedding (HSDE)} model, originally developed for linear programs \cite{Goldman:1956,Ye:1994}, also extend to LCPs \cite{Luo:2000}.
This embedding model encodes both optimality and infeasibility conditions into a single convex feasibility problem, and is at the heart of many interior-point solvers such as MOSEK \cite{mosek}, CVXOPT \cite{cvxopt}, and ECOS \cite{Domahidi:2013}, but also of an operator splitting solver SCS \cite{O'Donoghue:2016}.

Nevertheless, there has been an increased interest in extending the canonical form so that optimization problem terms that appear frequently in real-world applications are captured in a more direct way.
The main reason behind this is to solve optimization problems more efficiently.
Solving a problem in its original form is often more efficient as the canonicalization procedure typically introduces additional variables and constraints, and can destroy structure present in the original problem.
For instance, the ubiquity of a quadratic objective function in real-world applications has motivated the development of COSMO \cite{Garstka:2021}, an operator splitting solver for \emph{quadratic cone programs (QCPs)}, that handles quadratic objective terms directly rather than relying on reductions to second-order cone constraints.
COSMO is not built on top of an HSDE model, but instead relies on the alternating direction method of multipliers, whose iterates yield conclusive information regarding problem infeasibility~\cite{Banjac:2019}.
This new canonical form was also adopted in \cite{O'Donoghue:2021}, which extends the SCS solver, originally developed for LCPs \cite{O'Donoghue:2016}, to handle quadratic objective terms directly.
This extension formulates a QCP as a linear complementarity problem and applies the Douglas-Rachford algorithm (DRA) to its HSDE model proposed in \cite{Andersen:1999}.
The same HSDE model was used in \cite{Goulart:2026} to develop Clarabel, an interior-point solver for QCPs.

Another example of a ubiquitous term in real-world optimization problems are box constraints, \ie simultaneous lower and upper bounds on an optimization variable.
While some numerical solvers handle such constraints directly, such as OSQP \cite{Stellato:2020} and PDLP \cite{Applegate:2026}, and rely on using algorithm iterates to construct an infeasibility certificate \cite{Banjac:2019,Applegate:2024}, such non-conic constraints cannot be embedded directly within current HSDE frameworks, which rely instead on their conic reductions.

In this paper we propose an HSDE model that applies to a problem of minimizing the sum of two proper lower-semicontinuous convex functions and uses perspective functions to embed optimality and infeasibility conditions into a single inequality.
This new model extends the applicability of the HSDE framework beyond QCPs and opens a door to new algorithmic developments.
We then apply DRA to the proposed embedding.
Each iteration of the algorithm evaluates projections onto epigraphs of some particular perspective functions.
We then show how exploiting the problem structure yields a simpler DRA iteration, \ie each projection operation boils down to solving a scalar equation and computing the proximity operator of a function used in the original problem description.
Interestingly, when applied to QCPs, the proposed algorithm recovers the method underpinning the SCS solver \cite{O'Donoghue:2021}.
To demonstrate the applicability of the new HSDE model beyond QCPs, we apply the algorithm to a class of convex optimization problems with a non-smooth objective function and box constraints.

A non-directly related use of perspective functions within the HSDE framework was reported in \cite{Zhang:2004}, which considers LCPs with additional constraints of the form
\[
  f_i(x) \le 0, \quad i=1,\ldots,m,
\]
where each $f_i$ is a smooth convex function.
Each such constraint is then reformulated as
\begin{align*}
  & p f_i(x/p) \le q, \quad p \ge 0 \\
  & p = 1, \quad q = 0,
\end{align*}
and, noting that the inequalities above can be represented as $(x,p,q)\in\mcf{K}_i$, where $\mcf{K}_i$ is a nonempty closed convex cone, the original problem can be reduced to a conic program.
However, this approach is not very different from the standard canonicalization procedure that reformulates a convex optimization problem into an equivalent LCP.
Note that a similar reformulation is used in \cite[\S6.2]{O'Donoghue:2021} to reduce box constraints to the intersection of a cone and an affine equality constraint.

The paper is organized as follows.
We introduce some definitions and notation in the remainder of \S\ref{sec:intro}, and some known results on perspective functions in \S\ref{sec:perspective}.
\S\ref{sec:hsde} introduces a new HSDE model and derives some of its properties.
\S\ref{sec:dra} applies DRA to the new embedding and discusses its implementation and convergence.
\S\ref{sec:qcp} applies the proposed algorithm to QCPs, while \S\ref{sec:structured-cp} applies it to a class of non-conic optimization problems.
Finally, \S\ref{sec:numerics} demonstrates the effectiveness of the proposed algorithm on several small numerical examples.

\subsection{Notation}\label{sec:notation}
All definitions introduced here are standard and can be found in \cite{Bauschke:2017:book}, to which we also refer for basic results on convex analysis.

Let $\Nat$ denote the set of positive integers, and $\mcf{H}$ be a real Hilbert space with inner product $\innerprod{\cdot}{\cdot}$ and induced norm $\norm{\,\cdot\,}$.
Let $\setC$ be a nonempty subset of $\mcf{H}$ with $\closure{\setC}$ being its \emph{closure}.
The \emph{kernel} of a linear operator $A$ is denoted by $\ker{A}$ and its \emph{range} by $\range{A}$.

The set of proper lower semicontinuous convex functions from $\mcf{H}$ to $\left]-\infty,+\infty\right]$ is denoted by $\Gamma_0(\mcf{H})$.
For a function $f\in\Gamma_0(\mcf{H})$, we define its:
\begin{align*}
  &\text{\emph{domain}:} && \dom f = \lbrace x\in\mcf{H} \mid f(x) < +\infty \rbrace, \\
  &\text{\emph{epigraph}:} && \epi f = \lbrace (x,t)\in\mcf{H}\times\Re \mid f(x) \le t \rbrace, \\
  &\text{\emph{Fenchel conjugate}:} && f^* \colon \mcf{H}\to\left]-\infty,+\infty\right] \colon u\mapsto\sup_{x\in\mcf{H}}\left( \innerprod{x}{u} - f(x) \right), \\
  &\text{\emph{recession function}:} && \recession{f} \colon \mcf{H}\to\left]-\infty,+\infty\right] \colon y\mapsto\sup_{x\in\dom{f}} \left( f(x+y)-f(x) \right), \\
  &\text{\emph{perspective function}:} && \widetilde{f} \colon \mcf{H} \times \Re \to \left]-\infty,+\infty\right] \colon (x,\tau) \mapsto \begin{cases}
    \tau f(x/\tau) & \tau > 0 \\
    (\recession{f})(x) & \tau=0 \\
    +\infty & \text{otherwise,}
  \end{cases} \\
    &\text{\emph{proximity operator}:} && \prox_f \colon \mcf{H}\to\mcf{H} \colon x\mapsto\argmin_{y\in\mcf{H}}\left( f(y) + \half \norm{y-x}^2 \right).
\end{align*}
For a nonempty closed convex set $\setC\subseteq\mcf{H}$, we define its:
\begin{align*}
  &\text{\emph{polar cone}:} && \polar{\setC} = \Big\lbrace u\in\mcf{H} \mid \sup_{x\in\setC}\innerprod{x}{u} \le 0 \Big\rbrace, \\
  &\text{\emph{recession cone}:} && \recession{\setC} = \left\lbrace x\in\mcf{H} \mid (\forall y\in\setC) \: x+y\in\setC \right\rbrace, \\
  &\text{\emph{indicator function}:} && \indicator{\setC} \colon \mcf{H}\to\left[0,+\infty\right] \colon x\mapsto\begin{cases} 0 & x\in\setC \\ +\infty & \text{otherwise,}\end{cases} \\
  &\text{\emph{support function}:} && \support{\setC} \colon \mcf{H}\to\left]-\infty,+\infty\right] \colon u\mapsto\sup_{x\in\setC} \innerprod{x}{u}, \\
  &\text{\emph{projection operator}:} && \project{\setC} \colon \mcf{H}\to\mcf{H} \colon x\mapsto\argmin_{y\in\setC}\,\norm{y-x}.
\end{align*}
The dual cone of $\setC$ is $\dual{\setC}=-\polar{\setC}$.

We denote an $n$-dimensional box with lower bound $l$ and upper bound $u$ by
\[
  [l,u] \eqdef \lbrace x\in\Re^n \mid l \le x \le u \rbrace,
\]
where $l_i\in\Re\cup\{-\infty\}$ and $u_i\in\Re\cup\{+\infty\}$ for $i=1,\ldots,n$.
A set of $n$-dimensional vectors is denoted by $\Re^n$, a set of nonnegative $n$-dimensional vectors by $\Re_+^n$, a set of $m$-by-$n$ matrices by $\Re^{m\times n}$, and a set of $n$-by-$n$ symmetric positive semidefinite matrices  by $\symm_+^n$.

\section{Preliminaries on Perspective Functions}\label{sec:perspective}

The perspective of $f\in\Gamma_0(\mcf{H})$, denoted by $\widetilde{f}$ and defined in \S\ref{sec:notation}, is a function in $\Gamma_0(\mcf{H}\times\Re)$ \cite[\Prop~9.42]{Bauschke:2017:book}.
Since $\widetilde{f}$ is positively homogeneous, it is easy to show that its epigraph is a nonempty closed convex cone.
The following proposition shows how we can characterize the polar of $\epi \widetilde{f}$ via the perspective of $f^*$.

\begin{proposition}[{\cite[\Thm~14.4]{Rockafellar:1970}}]\label{prop:epi-perspective}
  Let $f\in\Gamma_0(\mcf{H})$ and consider a nonempty closed convex cone defined as the epigraph of its perspective, \ie
  \[
    \epi \widetilde{f} = \lbrace (x,\tau,\kappa)\in\mcf{H}\times\Re^2 \mid \widetilde{f}(x,\tau) \le \kappa \rbrace.
  \]
  Then the polar of $\epi \widetilde{f}$ is given by
  \[
    \polar{(\epi \widetilde{f})} = \lbrace (\lambda,\eps,\delta)\in\mcf{H}\times\Re^2 \mid \widetilde{f^*}(\lambda,-\delta) \le -\eps \rbrace.
  \]
\end{proposition}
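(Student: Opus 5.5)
Since $\epi\widetilde{f}$ is a cone, I will compute its polar from the definition: $(\lambda,\eps,\delta)$ belongs to the polar if and only if
\[
\innerprod{x}{\lambda}+\tau\eps+\kappa\delta\le 0 \quad\text{for all }(x,\tau,\kappa)\in\epi\widetilde{f}.
\]
The plan is to reduce this condition to a statement about $f^*$ and then recognise the perspective $\widetilde{f^*}$, splitting into the cases $\delta<0$, $\delta=0$ and $\delta>0$.

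\textbf{Case $\delta>0$.} Since $\kappa$ can be increased without bound (fix any point of the nonempty epigraph), the inequality fails. Also $\widetilde{f^*}(\lambda,-\delta)=+\infty$ for $-\delta<0$. Both sides are therefore empty, and this case is consistent.

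\textbf{Case $\delta<0$.} By homogeneity, divide by $-\delta$ and write $\mu=\lambda/(-\delta)$. Taking $\kappa=\widetilde{f}(x,\tau)$ is optimal, so the condition becomes
\[
\sup_{(x,\tau)}\big(\innerprod{x}{\mu}+\tau\,\eps/(-\delta)-\widetilde{f}(x,\tau)\big)\le 0 .
\]
This says $\widetilde{f}^*(\mu,\eps/(-\delta))\le 0$. Since $\widetilde{f}$ is positively homogeneous and in $\Gamma_0$, its conjugate is the indicator of the closed convex set $\{(\mu,s)\mid s\le -f^*(\mu)\}$. To verify this, note that for $\tau>0$ the substitution $x=\tau y$ gives
\[
\sup_{\tau>0}\tau\big(s+\sup_y(\innerprod{y}{\mu}-f(y))\big)=\sup_{\tau>0}\tau\big(s+f^*(\mu)\big).
\]
The $\tau=0$ slice needs no separate treatment: by lower semicontinuity and \cite[\Prop~9.42]{Bauschke:2017:book}, $\widetilde{f}$ on $\tau=0$ is the closure limit, so including it does not change the supremum. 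I would make this precise by noting that $\recession f(x)=\lim_{\tau\downarrow0}\tau f(y_0+x/\tau)$ for $y_0\in\dom f$. Hence the condition is $\eps/(-\delta)\le -f^*(\lambda/(-\delta))$. Multiplying by $-\delta>0$ gives $(-\delta)f^*(\lambda/(-\delta))\le-\eps$, which is exactly $\widetilde{f^*}(\lambda,-\delta)\le-\eps$.

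\textbf{Case $\delta=0$.} The condition is $\innerprod{x}{\lambda}+\tau\eps\le0$ for all $(x,\tau)\in\dom\widetilde{f}$. For $\tau>0$ this reads $\sup_{y\in\dom f}\innerprod{y}{\lambda}\le-\eps$, that is $\support{\dom f}(\lambda)\le-\eps$. The $\tau=0$ points lie in the closure of the $\tau>0$ part of the cone, so they are again covered by continuity of the linear functional. It remains to use the known identity $\recession{(f^*)}=\support{\dom f}$ (see \cite{Bauschke:2017:book}, or Rockafellar Thm 13.3). With it, the condition becomes $\widetilde{f^*}(\lambda,0)=(\recession f^*)(\lambda)\le-\eps$, as required.

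The main obstacle is the careful handling of the boundary $\tau=0$. This comes in two places: showing that the recession-function slice of $\epi\widetilde f$ adds no new constraints, and invoking $\recession f^*=\support{\dom f}$. Both rest on $\epi\widetilde f$ being the closure of the cone generated by $\epi f\times\{1\}$. I would establish that closure fact first and then use it throughout.
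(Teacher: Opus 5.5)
Your proposal is correct and follows the standard proof of the cited result \cite[\Thm~14.4]{Rockafellar:1970}, which the paper quotes without proof: test the polar inequality, split on the sign of $\delta$, use the conjugate $f^*$ when $\delta<0$, and use the identity $\recession{f^*}=\support{\dom f}$ when $\delta=0$. The only difference is that Rockafellar tests the inequality only on the generators $(x,1,\kappa)$ with $\kappa\ge f(x)$, which is legitimate precisely because of the closure fact you mention at the end and makes your separate $\tau=0$ limit arguments unnecessary; your version has the small advantage of working verbatim in a general Hilbert space $\mcf{H}$ rather than in $\Re^n$.
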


A systematic study of perspective function properties is conducted in \cite{Combettes:2018}, including characterizations of its Fenchel conjugate and subdifferential, while characterizations and evaluation of its proximity operator are studied in \cite{Combettes:2018b,Briceno-Arias:2024b,Briceno-Arias:2024c,Briceno-Arias:2024}.
The following two propositions show how we can express the projection onto $\epi \widetilde{f}$ in terms of the proximity operator of $\widetilde{f}$, as well as how to express the proximity operator of $\widetilde{f}$ in terms of the proximity operator of $f$.

\begin{proposition}[{\cite[\Thm~3.1]{Briceno-Arias:2024c}}]\label{prop:project-Kf}
  Let $f\in\Gamma_0(\mcf{H})$ and let $(\bar{x},\bar{\tau},\bar{\kappa})\in\mcf{H}\times\Re^2$.
  Then we have
  \[
    \project{\epi\widetilde{f}}(\bar{x},\bar{\tau},\bar{\kappa}) =
    \begin{cases}
      ( \project{\closure{\dom}\widetilde{f}}(\bar{x},\bar{\tau}), \bar{\kappa} ) & \tilde{f}(\project{\closure{\dom}\widetilde{f}}(\bar{x},\bar{\tau})) \le \bar{\kappa} \\
      ( \prox_{\mu\widetilde{f}}(\bar{x},\bar{\tau}), \bar{\kappa} + \mu ) & \text{otherwise},
    \end{cases}
  \]
  where $\mu\in ] 0, -\bar{\kappa} + \tilde{f}(\project{\closure{\dom}\widetilde{f}}(\bar{x},\bar{\tau})) ]$ is the unique solution to
  \[
    \mu + \bar{\kappa} - \widetilde{f}( \prox_{\mu\widetilde{f}}(\bar{x},\bar{\tau}) ) = 0.
  \]
\end{proposition}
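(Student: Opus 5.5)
This is a statement about projecting onto the epigraph of a general function $g\in\Gamma_0(\mcf{H}\times\Re)$, here $g=\widetilde{f}$. The plan is to prove the general fact and then specialise. Write $z=(x,\tau)$ and $\bar z=(\bar x,\bar\tau)$. The projection $(p,\kappa)=\project{\epi g}(\bar z,\bar\kappa)$ is the unique minimiser of
\[
  \half\norm{z-\bar z}^2+\half(\kappa-\bar\kappa)^2 \quad \text{subject to } g(z)\le\kappa .
\]
I would split into two cases according to whether the epigraph constraint is active.

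\textbf{Inactive case.} Suppose $g(\project{\closure{\dom}g}(\bar z))\le\bar\kappa$. Since $\epi g\subseteq \closure{\dom}g\times\Re$, the objective is bounded below by $\half\,\mathrm{dist}(\bar z,\closure{\dom}g)^2$. The point $(\project{\closure{\dom}g}(\bar z),\bar\kappa)$ lies in $\epi g$ and attains this bound, so by uniqueness it is the projection. This is the first branch.

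\textbf{Active case.} Otherwise set $p_0=\project{\closure{\dom}g}(\bar z)$, so that $g(p_0)>\bar\kappa$ (possibly $+\infty$).
\begin{enumerate}
  \item First I would argue $\kappa>\bar\kappa$. If $\kappa\le\bar\kappa$, replacing $\kappa$ by $\bar\kappa$ keeps the point feasible and does not increase the cost. Then optimality in $z$ would force $z=p_0$, and $(p_0,\bar\kappa)$ is infeasible. (If $\kappa<\bar\kappa$, one can also just raise $\kappa$ to $\bar\kappa$ and strictly decrease the cost.) Hence $\kappa>\bar\kappa$, and the constraint is active: $g(p)=\kappa$, since otherwise decreasing $\kappa$ would reduce the cost.
  \item Set $\mu=\kappa-\bar\kappa>0$. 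Along the feasible curve $\kappa'=g(z)$, fixing $\kappa=\bar\kappa+\mu$ and minimising over $z$ with $g(z)\le\bar\kappa+\mu$ is a convex problem. Its KKT conditions are $\bar z-p\in\lambda\,\partial g(p)$ and $\kappa-\bar\kappa=\lambda$, giving $\lambda=\mu$ and hence $p=\prox_{\mu g}(\bar z)$. Equivalently, I would use the standard characterisation $(\bar z-p,\bar\kappa-\kappa)\in N_{\epi g}(p,\kappa)$, whose normal vectors at points with $g(p)=\kappa$ have the form $\lambda(w,-1)$ with $w\in\partial g(p)$. This gives exactly $\bar\kappa-\kappa=-\lambda$ and $\bar z-p=\lambda w$.
  \item The relation $g(p)=\kappa$ then becomes $\mu+\bar\kappa-g(\prox_{\mu g}(\bar z))=0$.
\end{enumerate}

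\textbf{Main obstacle.} The delicate step is justifying the normal-cone form at boundary points where $\partial g(p)$ may be empty, i.e.\ horizontal normals $(w,0)$. This is where $\tau=0$ could cause trouble for perspectives. I would rule it out by noting that a horizontal normal would force $\kappa=\bar\kappa$, which is excluded by step 1.

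\textbf{Uniqueness and range of $\mu$.} To show $\mu$ is the unique root in $]0,\,g(p_0)-\bar\kappa]$, I would use monotonicity: $\phi(\mu)=\mu+\bar\kappa-g(\prox_{\mu g}(\bar z))$ is strictly increasing and continuous. The map $\mu\mapsto g(\prox_{\mu g}(\bar z))$ is nonincreasing, by the standard comparison of the two prox optimality conditions. Moreover $\phi(0^+)\le\bar\kappa-g(p_0)<0$, using $\prox_{\mu g}(\bar z)\to p_0$ and lower semicontinuity. Finally $\phi\big(g(p_0)-\bar\kappa\big)\ge0$ when this quantity is finite. Existence of a root also follows from existence of the projection.

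Specialising to $g=\widetilde{f}\in\Gamma_0$ gives the claim.
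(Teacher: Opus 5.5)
Your proof is correct, apart from two loosely justified steps noted below. The paper does not prove this proposition; it quotes it from Brice\~{n}o-Arias and Vivar-Vargas, so your write-up is a self-contained alternative. Your argument works for an arbitrary $g\in\Gamma_0(\mcf{H}\times\Re)$, via the variational characterization of the projection onto the closed convex set $\epi g$. It never uses that $g=\widetilde f$ is a perspective or that $\epi\widetilde f$ is a cone, so it is more general than what is needed here. In effect it is the classical epigraph-projection formula for real-valued functions, with the first branch absorbing the horizontal normals that $\closure{\dom} g$ contributes when $\dom g$ is not the whole space.

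\textbf{Step 1.} ``Optimality in $z$'' with $\kappa$ frozen at $\bar\kappa$ would only make $p$ the projection of $\bar z$ onto the sublevel set $\{g\le\bar\kappa\}$, not onto $\closure{\dom} g$. What actually forces $p=p_0$ is the joint variational inequality
\[
  \innerprod{\bar z-p}{z-p}+(\bar\kappa-\kappa)(\kappa'-\kappa)\le 0
  \quad\text{for all } (z,\kappa')\in\epi g .
\]
When $\kappa=\bar\kappa$ the second term vanishes, so $\bar z-p\in N_{\closure{\dom} g}(p)$. Hence $p=p_0$ and $g(p_0)\le\bar\kappa$, a contradiction.

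\textbf{Step 2.} Take the same inequality with $\kappa'=g(z)$ and $\kappa=g(p)=\bar\kappa+\mu$, $\mu>0$. It gives $g(z)\ge g(p)+\innerprod{(\bar z-p)/\mu}{z-p}$, i.e.\ $p=\prox_{\mu g}(\bar z)$. Use this instead of the KKT version of step 2. That version needs a Slater point $g(z)<\kappa$, which need not exist (e.g.\ when $\kappa=\inf g$), and it does not by itself identify the multiplier with $\mu$.

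\textbf{Range of $\mu$.} Your claim $\phi(g(p_0)-\bar\kappa)\ge 0$ needs its one-line proof. Since $\prox_{\mu g}(\bar z)\in\dom g$, it is at least as far from $\bar z$ as $p_0$. Comparing the two points in the prox objective then gives $g(\prox_{\mu g}(\bar z))\le g(p_0)$ for every $\mu>0$, which bounds the root directly. Continuity of $\phi$ and its limit at $0^+$ are then unnecessary: existence of the root comes from existence of the projection, and uniqueness from strict monotonicity.
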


\begin{proposition}[{\cite[\Prop~2.1]{Briceno-Arias:2024c}}]\label{prop:prox-ft}
  Let $f\in\Gamma_0(\mcf{H})$, let $\mu>0$, and let $(\bar{x},\bar{\tau})\in\mcf{H}\times\Re$.
  Then we have
  \[
    \prox_{\mu \widetilde{f}}(\bar{x},\bar{\tau}) = \begin{cases}
      \big( \bar{x} - \mu \project{\closure{\dom} f^*}(\bar{x}/\mu), 0 \big) & \bar{\tau} + \mu f^*(\project{\closure{\dom} f^*}(\bar{x}/\mu)) \le 0 \\
      \big( \tau \prox_{(\mu/\tau)f}(\bar{x}/\tau) , \tau \big) & \text{otherwise},
    \end{cases}
  \]
  where $\tau\in ]0, \bar{\tau} + \mu f^*(\project{\closure{\dom} f^*}(\bar{x}/\mu))]$ is the unique solution to
    \[
      \tau = \bar{\tau} + \mu f^* \big( \prox_{(\tau/\mu)f^*}(\bar{x}/\mu) \big).
    \]
\end{proposition}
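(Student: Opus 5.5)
The plan is to compute $\prox_{\mu\widetilde{f}}(\bar{x},\bar{\tau})$ from the optimality conditions of the defining minimization, using the known subdifferential of a perspective. Write $(p,\tau)=\prox_{\mu\widetilde{f}}(\bar{x},\bar{\tau})$. This point exists and is unique because $\widetilde{f}\in\Gamma_0(\mcf{H}\times\Re)$, and it is characterized by
\[
  \big((\bar{x}-p)/\mu,\,(\bar{\tau}-\tau)/\mu\big)\in\partial\widetilde{f}(p,\tau).
\]
For $\tau>0$, the subdifferential of the perspective is $\partial\widetilde{f}(p,\tau)=\lbrace(u,-f^*(u))\mid u\in\partial f(p/\tau)\rbrace$. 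At $\tau=0$, one has $(u,s)\in\partial\widetilde{f}(p,0)$ if and only if $u\in\closure{\dom} f^*$, $s\le -f^*(u)$ and $\innerprod{p}{u}=\sigma_{\dom f^*}(p)$. This follows from $\recession{f}=\sigma_{\dom f^*}$ together with the representation $\widetilde{f}=\sigma_{\setD}$, where $\setD=\lbrace(u,s)\mid f^*(u)+s\le0\rbrace$. I would derive these from $\widetilde{f}=\sigma_{\setD}$, or quote them from \cite{Combettes:2018}, since $\partial\sigma_{\setD}(z)$ is the face of $\closure{\setD}$ exposed by $z$.

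\emph{Case $\tau>0$.} Put $u=(\bar{x}-p)/\mu$. The first component of the optimality condition gives $u\in\partial f(p/\tau)$, which is equivalent to $p/\tau=\prox_{(\mu/\tau)f}(\bar{x}/\tau)$ because $p/\tau+(\mu/\tau)u=\bar{x}/\tau$. This yields $p=\tau\prox_{(\mu/\tau)f}(\bar{x}/\tau)$. The second component gives $\tau=\bar{\tau}+\mu f^*(u)$. Moreau's decomposition, applied in the rescaled form $\bar{x}/\mu=u+(\tau/\mu)(p/\tau)$ with $p/\tau\in\partial f^*(u)$, gives $u=\prox_{(\tau/\mu)f^*}(\bar{x}/\mu)$. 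Hence $\tau$ solves the stated scalar equation.

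\emph{Case $\tau=0$.} Here $u=(\bar{x}-p)/\mu\in\closure{\dom}f^*$ and $\innerprod{p}{u}=\sigma_{\closure{\dom}f^*}(p)$, so $p\in N_{\closure{\dom}f^*}(u)$. Since $\bar{x}/\mu-u=p/\mu$, this says exactly that $u=\project{\closure{\dom}f^*}(\bar{x}/\mu)$. The second component then reads $\bar{\tau}/\mu\le -f^*(u)$, which is the stated condition $\bar{\tau}+\mu f^*(\project{\closure{\dom}f^*}(\bar{x}/\mu))\le0$. Conversely, if that condition holds, the candidate $(\bar{x}-\mu u,0)$ satisfies the optimality inclusion, so it is the prox by uniqueness.

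This proves the dichotomy: $\tau=0$ occurs exactly when the condition holds, and otherwise $\tau>0$. Uniqueness of the solution $\tau$ of the scalar equation then follows from uniqueness of the prox. To see this, take any positive solution $\tau$ of the equation, set $u=\prox_{(\tau/\mu)f^*}(\bar{x}/\mu)$, and reverse the Moreau steps; this shows that $(\tau\prox_{(\mu/\tau)f}(\bar{x}/\tau),\tau)$ satisfies the optimality inclusion.

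For the bound $\tau\le\bar{\tau}+\mu f^*(\project{\closure{\dom}f^*}(\bar{x}/\mu))$, I would argue that $\phi(t)=f^*(\prox_{(t/\mu)f^*}(\bar{x}/\mu))$ is nonincreasing in $t$, and that as $t\to0^+$ it tends to $f^*(\project{\closure{\dom}f^*}(\bar{x}/\mu))$ from below. Monotonicity is a standard property of the prox path, obtained by comparing the prox objectives at two parameters.

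I expect the main obstacle to be the boundary case $\tau=0$. The subdifferential there must be computed carefully, since it involves the recession function and the closure of $\dom f^*$. The limit claim used for the upper bound also needs care, because it relies on lower semicontinuity of $f^*$ and on the convergence $\prox_{tf^*}\to\project{\closure{\dom}f^*}$ as $t\to0^+$.
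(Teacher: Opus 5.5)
The paper does not prove this proposition; it quotes it from \cite{Briceno-Arias:2024c}, so your argument has to stand on its own. Most of it does. The representation $\widetilde{f}=\support{D}$ with $D=\lbrace(u,s)\mid f^*(u)+s\le 0\rbrace$, the resulting description of $\partial\widetilde{f}$, the two-case solution of the optimality inclusion, and uniqueness by reversing the Moreau steps are all correct. For the dichotomy, add the one-line remark that $\tau\ge 0$, because the prox lies in $\dom\widetilde{f}\subseteq\mcf{H}\times\Re_+$.

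The gap is in the upper bound $\tau\le\bar{\tau}+\mu f^*(\bar{u})$, where $\bar{u}\eqdef\project{\closure{\dom} f^*}(\bar{x}/\mu)$.

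\emph{Why your argument fails.} Your $\phi$ is nonincreasing, so $\lim_{t\to 0^+}\phi(t)=\sup_{t>0}\phi(t)$. The bound therefore amounts to $\sup_{t>0}\phi(t)\le f^*(\bar{u})$. Lower semicontinuity of $f^*$ together with $\prox_{(t/\mu)f^*}(\bar{x}/\mu)\to\bar{u}$ gives only $f^*(\bar{u})\le\liminf_{t\to0^+}\phi(t)$, which is the opposite inequality. A convex lower semicontinuous function need not be upper semicontinuous along sequences in its domain that approach a boundary point. So the claim that $\phi$ ``tends to $f^*(\bar{u})$ from below'' is true, but it does not follow from the ingredients you cite.

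\emph{The fix.} Use $\bar{u}$ directly as a competitor in the prox problem; this makes both the monotonicity and the limit unnecessary. If $f^*(\bar{u})=+\infty$, there is nothing to prove. Otherwise, set $y=\bar{x}/\mu$ and $u=\prox_{(\tau/\mu)f^*}(y)$. Minimality of $u$ gives
\[
  f^*(u)+\tfrac{\mu}{2\tau}\norm{u-y}^2\le f^*(\bar{u})+\tfrac{\mu}{2\tau}\norm{\bar{u}-y}^2 .
\]
Moreover, $\norm{\bar{u}-y}\le\norm{u-y}$, because $u\in\dom f^*\subseteq\closure{\dom} f^*$ and $\bar{u}$ is the nearest point of that set to $y$. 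Hence $f^*(u)\le f^*(\bar{u})$. The scalar equation then gives $\tau=\bar{\tau}+\mu f^*(u)\le\bar{\tau}+\mu f^*(\bar{u})$, which is the required bound.
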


In general, projecting onto $\epi \widetilde{f}$ requires solving two coupled scalar equations to find $\tau$ and $\mu$.
Nevertheless, we will show in \S\ref{subsec:dra:projF} that evaluating the projection onto $\epi \widetilde{f}$ can be simplified for some particular functions and arguments that arise in our application.

\section{Homogeneous Self-Dual Embedding}\label{sec:hsde}

Consider the following convex optimization problem:
\begin{equation}\label{eqn:primal}
  \underset{x\in\mcf{H}}{\rm minimize} \quad f(x) + g(x),
  \tag{\mbox{$\mcf{P}$}}
\end{equation}
where $f$ and $g$ are functions in $\Gamma_0(\mcf{H})$.
The Fenchel dual of \eqref{eqn:primal} takes the following form:
\begin{equation}\label{eqn:dual}
  \underset{\nu\in\mcf{H}}{\rm maximize} \quad -f^*(\nu) - g^*(-\nu).
  \tag{\mbox{$\mcf{D}$}}
\end{equation}
We assume that strong duality holds, \ie the optimal values of problems \eqref{eqn:primal} and \eqref{eqn:dual} coincide.
To simplify the notation in subsequent analysis, we define $F\colon\mcf{H}\times\mcf{H}\to\left]-\infty,+\infty\right]$ and $G\colon\mcf{H}\times\mcf{H}\to\left]-\infty,+\infty\right]$ as
\begin{align*}
  F(x,\nu) &\coloneqq f(x) + f^*(\nu) \\
  G(x,\nu) &\coloneqq g(x) + g^*(-\nu).
\end{align*}
Note that $F$ and $G$ are functions in $\Gamma_0(\mcf{H}\times\mcf{H})$.
Our goal is to find a primal-dual solution to the problem pair \eqref{eqn:primal}-\eqref{eqn:dual}, or a certificate of primal or dual infeasibility.
To that end, we are interested in finding a nonzero tuple $(x,\nu,\tau) \in \mcf{H} \times \mcf{H} \times \Re_+$ such that the following inequality holds:
\begin{equation}\label{eqn:inequality}
  \widetilde{F}(x,\nu,\tau) + \widetilde{G}(x,\nu,\tau) \le 0,
  \tag{\mbox{$\mcf{I}$}}
\end{equation}
which can be formulated as the following feasibility problem:
\begin{equation}\label{eqn:hsde}
  \begin{array}{ll}
    {\rm find} & (x,\nu,\tau,\kappa) \\
    \textrm{s.\ t.} & \widetilde{F}(x,\nu,\tau) \le -\kappa \\
    & \widetilde{G}(x,\nu,\tau) \le \kappa,
  \end{array}
  \tag{HSDE}
\end{equation}
whose properties we study in the remainder of this section.

\subsection{Convexity and homogeneity of the solution set}\label{sec:hsde:sol}

The set of solutions $(x,\nu,\tau,\kappa)$ to \eqref{eqn:hsde} is a convex cone as it is the intersection of the following nonempty closed convex cones:
\begin{subequations}\label{eqn:KF-KG}
\begin{align}
  \mcf{F} &\eqdef
  \lbrace (x,\nu,\tau,\kappa) \in \mcf{H}\times\mcf{H}\times\Re^2 \mid \widetilde{F}(x,\nu,\tau) \le -\kappa \rbrace \label{eqn:KF-KG:KF} \\
  \mcf{G} &\eqdef
  \lbrace (x,\nu,\tau,\kappa) \in \mcf{H}\times\mcf{H}\times\Re^2 \mid \widetilde{G}(x,\nu,\tau) \le \kappa \rbrace. \label{eqn:KF-KG:KG}
\end{align}
\end{subequations}
We say that problem \eqref{eqn:hsde} is homogeneous because if $(x,\nu,\tau,\kappa)$ is its solution, then so is $(tx,t\nu,t\tau,t\kappa)$ for any $t\ge0$.

\subsection{Encoding zero duality gap}

A tuple $(x,\nu,\tau)$ satisfying \eqref{eqn:inequality} with $\tau>0$ encodes optimality of the primal-dual problem pair \eqref{eqn:primal}-\eqref{eqn:dual}.
In particular, $x/\tau$ is a primal and $\nu/\tau$ is a dual solution.
To see this, note that dividing \eqref{eqn:inequality} by $\tau>0$ yields
\begin{equation}\label{eqn:zero-duality-gap}
  f(x/\tau) + g(x/\tau) + f^*(\nu/\tau) + g^*(-\nu/\tau) \le 0.
\end{equation}
Since the left-hand side of \eqref{eqn:zero-duality-gap} is exactly the duality gap, which must be greater than or equal to zero (due to weak duality), it follows that the above inequality must be satisfied with equality.
Moreover, the fact that the left-hand side of the inequality is finite means that $(x/\tau) \in \dom f \cap \dom g$ and $(\nu/\tau) \in \dom f^* \cap (-\dom g^*)$.
Hence, the primal feasibility of $x/\tau$, the dual feasibility of $\nu/\tau$, and the zero duality gap imply that $(x/\tau,\nu/\tau)$ is a primal-dual solution.

\subsection{Encoding strong infeasibility}

A tuple $(x,\nu,\tau)$ satisfying \eqref{eqn:inequality} with strict inequality encodes strong infeasibility of \eqref{eqn:primal} and/or \eqref{eqn:dual}.
Due to weak duality, it must be that $\tau=0$, which reduces \eqref{eqn:inequality} to
\[
  (\recession{f})(x) + (\recession{g})(x) + (\recession{f^*})(\nu) + (\recession{g^*})(-\nu) < 0.
\]
Hence, it must be that at least one of the following inequalities holds:
\begin{subequations}\label{eqn:infeas-strict-ineq}
\begin{align}
  (\recession{f^*})(\nu) + (\recession{g^*})(-\nu) &< 0 \label{eqn:infeas-strict-ineq:f*} \\
  (\recession{f})(x) + (\recession{g})(x) &< 0, \label{eqn:infeas-strict-ineq:g*}
\end{align}
\end{subequations}
which encode strong infeasibility of \eqref{eqn:primal} and \eqref{eqn:dual}, respectively.
To see this, note that \eqref{eqn:infeas-strict-ineq:f*} can be written as
\[
  0 > (\recession{f^*})(\nu) + (\recession{g^*})(-\nu) = \support{\dom{f}}(\nu) + \support{\dom{g}}(-\nu),
\]
or equivalently,
\[
  \sup_{x\in\dom f}\innerprod{x}{\nu} < \inf_{z\in\dom g}\innerprod{z}{\nu},
\]
which means that there exists a hyperplane (with normal $\nu$) separating $\dom{f}$ and $\dom{g}$ strongly, implying that the sets do not intersect.

Similarly, we can write
\[
  0 > (\recession{f})(x) + (\recession{g})(x) = \support{\dom{f^*}}(x) + \support{\dom{g^*}}(x),
\]
or equivalently
\[
  \sup_{\nu\in\dom{f^*}}\innerprod{\nu}{x} < \inf_{\lambda\in -\dom g^*}\innerprod{\lambda}{x},
\]
which means that there exists a hyperplane (with normal $x$) separating $\dom{f^*}$ and $-\dom{g^*}$ strongly.

\subsection{Non-conclusive case}

A nonzero tuple $(x,\nu,0)$ satisfying \eqref{eqn:inequality} with equality can still serve as an infeasibility certificate if one of the strict inequalities in \eqref{eqn:infeas-strict-ineq} holds.
Otherwise, nothing can be concluded about the original problem.
We summarize all the discussed cases in Table~\ref{tab:solutions}.
Note also that, since the solution set is the intersection of nonempty closed convex cones $\mcf{F}$ and $\mcf{G}$, zero is always a solution to \eqref{eqn:inequality}.

\begin{table}[t]
\centering
\begin{tabular}{c | c c } 
  & $\tau > 0$ & $\tau = 0$ \\[.5ex]
  \hline \\[-1.5ex]
  LHS of \eqref{eqn:inequality} $< 0$ & N/A & infeasible \\[.5ex]
  LHS of \eqref{eqn:inequality} $= 0$ & solved & non-conclusive
\end{tabular}
\caption{How the solutions to \eqref{eqn:inequality} relate to the status of problem pair \eqref{eqn:primal}-\eqref{eqn:dual}.}
\label{tab:solutions}
\end{table}

\subsection{Self-duality of HSDE}

Problem~\eqref{eqn:hsde} is equivalent to the following one:
\begin{equation}\label{eqn:hsde-composite}
  \underset{(x,\nu,\tau,\kappa)}{\rm minimize} \quad \indicator{\mcf{F}}(x,\nu,\tau,\kappa) + \indicator{\mcf{G}}(x,\nu,\tau,\kappa).
\end{equation}
The dual problem then takes the following form:
\[
  \underset{(\lambda,z,\eps,\delta)}{\rm maximize} \quad -\indicator{\polar{\mcf{F}}}(\lambda,z,\eps,\delta) - \indicator{\polar{\mcf{G}}}(-\lambda,-z,-\eps,-\delta),
\]
where, due to Proposition~\ref{prop:epi-perspective} and Lemma~\ref{lem:FG_conjugate}, the polar cones are given by
\begin{subequations}\label{KF-KG-polar}
\begin{align}
\begin{split}
  \polar{\mcf{F}} &= \lbrace (\lambda,z,\eps,\delta) \in \mcf{H}\times\mcf{H}\times\Re^2 \mid \widetilde{F^*}(\lambda,z,\delta) \le -\eps \rbrace \\
  & = \lbrace (\lambda,z,\eps,\delta) \in \mcf{H}\times\mcf{H}\times\Re^2 \mid \widetilde{F}(z,\lambda,\delta) \le -\eps \rbrace
\end{split} \label{KF-KG-polar:KF} \\
\begin{split}
  \polar{\mcf{G}} &= \lbrace (\lambda,z,\eps,\delta) \in \mcf{H}\times\mcf{H}\times\Re^2 \mid \widetilde{G^*}(\lambda,z,-\delta) \le -\eps \rbrace \\
  &= \lbrace (\lambda,z,\eps,\delta) \in \mcf{H}\times\mcf{H}\times\Re^2 \mid \widetilde{G}(-z,-\lambda,-\delta) \le -\eps \rbrace.
\end{split} \label{KF-KG-polar:KG}
\end{align}
\end{subequations}
Hence, the dual problem can be written as
\[
  \begin{array}{ll}
    {\rm find} & (z,\lambda,\delta,\eps) \\
    \textrm{s.\ t.} & \widetilde{F}(z,\lambda,\delta) \le -\eps \\
    & \widetilde{G}(z,\lambda,\delta) \le \eps,
  \end{array}
\]
which is equivalent to \eqref{eqn:hsde}.
Hence, the problem is self-dual.

\section{Douglas-Rachford Algorithm}\label{sec:dra}

The Douglas-Rachford algorithm (DRA) is a well-known operator splitting method that can be used to solve problems of the form~\eqref{eqn:primal} \cite{Lions:1979}.
Starting from some $s^1\in\mcf{H}$, the algorithm generates the following iterates:
\begin{align*}
  x^k &\gets \prox_{\gamma f} (s^k) \\
  \tilde{x}^k &\gets \prox_{\gamma g}(2 x^k - s^k) \\
  s^{k+1} &\gets s^k + \tilde{x}^k - x^k,
\end{align*}
where $\gamma>0$ is the algorithm parameter.

In this section we analyze DRA when applied to problem~\eqref{eqn:hsde-composite}, \ie
\begin{subequations}\label{eqn:dra-hsde}
\begin{align}
  (x^k,\nu^k,\tau^k,\kappa^k) &\gets \project{\mcf{F}} (s_x^k,s_\nu^k,s_\tau^k,s_\kappa^k) \label{eqn:dra-hsde:PF} \\
  (\tilde{x}^k,\tilde{\nu}^k,\tilde{\tau}^k,\tilde{\kappa}^k) &\gets \project{\mcf{G}} \big( 2(x^k,\nu^k,\tau^k,\kappa^k) - (s_x^k,s_\nu^k,s_\tau^k,s_\kappa^k) \big) \label{eqn:dra-hsde:PG} \\
  (s_x^{k+1},s_\nu^{k+1},s_\tau^{k+1},s_\kappa^{k+1}) &\gets (s_x^k,s_\nu^k,s_\tau^k,s_\kappa^k) + (\tilde{x}^k,\tilde{\nu}^k,\tilde{\tau}^k,\tilde{\kappa}^k) - (x^k,\nu^k,\tau^k,\kappa^k) \label{eqn:dra-hsde:s-update}.
\end{align}
\end{subequations}
Note that the algorithm parameter $\gamma$ plays no role in evaluating proximity operators of indicator functions as they reduce to projection operators for any value of $\gamma>0$.

\subsection{Simplifying the DRA iteration}

We next show how we can simplify the DRA steps by exploiting structure of problem~\eqref{eqn:hsde-composite}.
Assuming
\begin{equation}\label{eqn:x-nu-init}
  s_x^k=s_\nu^k=p^k
  \quad\text{and}\quad
  s_\tau^k=s_\kappa^k=r^k,
\end{equation}
for some $p^k\in\mcf{H}$ and $r^k\in\Re$, we can represent \eqref{eqn:dra-hsde:PF} as
\[
  (x^k,\nu^k,\tau^k,\kappa^k) = \project{\mcf{F}}(p^k,p^k,r^k,r^k).
\]
Now consider the projection of the same point $(p^k,p^k,r^k,r^k)$ onto the polar of $\mcf{F}$, \ie
\[
  (\lambda^k,z^k,\eps^k,\delta^k) = \project{\polar{\mcf{F}}}(p^k,p^k,r^k,r^k).
\]
Observing from \eqref{eqn:KF-KG:KF} and \eqref{KF-KG-polar:KF} that the sets $\mcf{F}$ and $\polar{\mcf{F}}$ are equivalent up to the permutation of arguments, it follows from \eqref{eqn:x-nu-init} and \cite[\Prop~29.2(ii)]{Bauschke:2017:book} that
\begin{equation}\label{eqn:zmde-via-xntk}
  x^k = z^k, \quad
  \nu^k = \lambda^k, \quad
  \tau^k = \delta^k, \quad
  \kappa^k = \eps^k.
\end{equation}
Due to Moreau's decomposition \cite[\Thm~6.30]{Bauschke:2017:book}, we also have
\[
  x^k + \lambda^k = p^k, \quad
  \nu^k + z^k = p^k, \quad
  \tau^k + \eps^k = r^k, \quad
  \kappa^k + \delta^k = r^k,
\]
and thus
\begin{equation}\label{eqn:nu_from_x}
  \nu^k = p^k - x^k, \quad
  \kappa^k = r^k - \tau^k.
\end{equation}
It also follows from Moreau's decomposition that
\[
  \innerprod{(x^k,\nu^k,\tau^k,\kappa^k)}{(\lambda^k,z^k,\eps^k,\delta^k)} = 0,
\]
which due to \eqref{eqn:zmde-via-xntk} and \eqref{eqn:nu_from_x} reduces to
\begin{equation}\label{x-tau-orth}
  \innerprod{x^k}{p^k-x^k} + \tau^k (r^k - \tau^k) = 0.
\end{equation}
Observing that the arguments of $\project{\mcf{G}}$ in \eqref{eqn:dra-hsde:PG} satisfy
\begin{align*}
  2x^k - s_x^k &= 2x^k - p^k = x^k - \nu^k \eqqcolon \tilde{p}^k \\
  2\nu^k - s_\nu^k &= 2(p^k - x^k) - p^k = -\tilde{p}^k \\
  2\tau^k - s_\tau^k &= 2\tau^k - r^k = \tau^k - \kappa^k \eqqcolon \tilde{r}^k \\
  2\kappa^k - s_\kappa^k &= 2(r^k - \tau^k) - r^k = -\tilde{r}^k,
\end{align*}
we can represent \eqref{eqn:dra-hsde:PG} as
\[
  (\tilde{x}^k,\tilde{\nu}^k,\tilde{\tau}^k,\tilde{\kappa}^k) = \project{\mcf{G}}(\tilde{p}^k,-\tilde{p}^k,\tilde{r}^k,-\tilde{r}^k).
\]
Now consider the projection of the same point $(\tilde{p}^k,-\tilde{p}^k,\tilde{r}^k,-\tilde{r}^k)$ onto the polar of $\mcf{G}$, \ie
\[
  (\tilde{\lambda}^k,\tilde{z}^k,\tilde{\eps}^k,\tilde{\delta}^k) = \project{\polar{\mcf{G}}}(\tilde{p}^k,-\tilde{p}^k,\tilde{r}^k,-\tilde{r}^k).
\]
Observing from \eqref{eqn:KF-KG:KG} and \eqref{KF-KG-polar:KG} that the sets $\mcf{G}$ and $\polar{\mcf{G}}$ are equivalent up to the permutation and sign of arguments, it follows from the properties of the projection point $(\tilde{p}^k,-\tilde{p}^k,\tilde{r}^k,-\tilde{r}^k)$ and \cite[\Prop~29.2(ii)]{Bauschke:2017:book} that
\[
  \tilde{x}^k = -\tilde{z}^k, \quad
  \tilde{\nu}^k = -\tilde{\lambda}^k, \quad
  \tilde{\tau}^k = -\tilde{\delta}^k, \quad
  \tilde{\kappa}^k = -\tilde{\eps}^k.
\]
Due to Moreau's decomposition, we also have
\[
  \tilde{x}^k + \tilde{\lambda}^k = \tilde{p}^k, \quad
  \tilde{\nu}^k + \tilde{z}^k = -\tilde{p}^k, \quad
  \tilde{\tau}^k + \tilde{\eps}^k = \tilde{r}^k, \quad
  \tilde{\kappa}^k + \tilde{\delta}^k = -\tilde{r}^k,
\]
and thus
\begin{equation}\label{eqn:nut_from_xt}
  \tilde{\nu}^k = -\tilde{p}^k + \tilde{x}^k, \quad
  \tilde{\kappa}^k = -\tilde{r}^k + \tilde{\tau}^k.
\end{equation}
It also follows from Moreau's decomposition that
\[
  \innerprod{\tilde{x}^k}{\tilde{p}^k-\tilde{x}^k} + \tilde{\tau}^k (\tilde{r}^k - \tilde{\tau}^k) = 0.
\]
Observing from the definitions of $\tilde{p}^k$ and $\tilde{r}^k$, and \eqref{eqn:nut_from_xt} that
\begin{align*}
  \tilde{x}^k - x^k &= \tilde{\nu}^k - \nu^k \\
  \tilde{\tau}^k - \tau^k &= \tilde{\kappa}^k - \kappa^k,
\end{align*}
we can write \eqref{eqn:dra-hsde:s-update} as
\begin{align*}
  s_x^{k+1} &= s_x^k + \tilde{x}^k - x^k = p^k + \tilde{x}^k - x^k \eqqcolon p^{k+1} \\
  s_\nu^{k+1} &= s_\nu^k + \tilde{\nu}^k - \nu^k = p^k + \tilde{x}^k - x^k = p^{k+1} \\
  s_\tau^{k+1} &= s_\tau^k + \tilde{\tau}^k - \tau^k = r^k + \tilde{\tau}^k - \tau^k \eqqcolon r^{k+1} \\
  s_\kappa^{k+1} &= s_\kappa^k + \tilde{\kappa}^k - \kappa^k = r^k + \tilde{\tau}^k - \tau^k = r^{k+1}.
\end{align*}
In other words, if the initial iterate of DRA satisfies \eqref{eqn:x-nu-init}, then it will be satisfied for all iterations $k\in\Nat$.
We can now simplify iteration~\eqref{eqn:dra-hsde} as
\begin{subequations}\label{eqn:dra-hsde-simpler}
\begin{align}
  (x^k,\nu^k,\tau^k,\kappa^k) &\gets \project{\mcf{F}} (p^k,p^k,r^k,r^k) \label{eqn:dra-hsde-simpler:PKf} \\
  (\tilde{p}^k,\tilde{r}^k) &\gets (2x^k-p^k, 2\tau^k-r^k) \\
  (\tilde{x}^k,\tilde{\nu}^k,\tilde{\tau}^k,\tilde{\kappa}) &\gets \project{\mcf{G}} \big( \tilde{p}^k, -\tilde{p}^k, \tilde{r}^k, -\tilde{r}^k \big) \label{eqn:dra-hsde-simpler:PKg} \\
  (p^{k+1},r^{k+1}) &\gets (p^k,r^k) + (\tilde{x}^k,\tilde{\tau}^k) - (x^k,\tau^k),
\end{align}
\end{subequations}
where $(\nu^k,\kappa^k,\tilde{\nu}^k,\tilde{\kappa}^k)$ satisfy \eqref{eqn:nu_from_x} and \eqref{eqn:nut_from_xt}.

Note that the simplification of the DRA iteration \eqref{eqn:dra-hsde} made by an appropriate algorithm initialization given in \eqref{eqn:x-nu-init} and using Moreau's decomposition to establish a relationship between inputs and outputs of the projection operators is similar to the approach taken in \cite[\S3.2]{O'Donoghue:2016}.

\subsection{Evaluating $\project{\mcf{F}}$}\label{subsec:dra:projF}

We now discuss how to evaluate the projection in \eqref{eqn:dra-hsde-simpler:PKf}.
It follows from Proposition~\ref{prop:project-Kf} that
\begin{equation}\label{eqn:PKF-update}
  (x^k,\nu^k,\tau^k,\kappa^k) \gets
  \begin{cases}
    ( \project{\closure{\dom}\widetilde{F}}(p^k,p^k,r^k), r^k ) & \tilde{F}(\project{\closure{\dom}\widetilde{F}}(p^k,p^k,r^k)) \le  -r^k \\
    ( \prox_{\mu^k\widetilde{F}}(p^k,p^k,r^k), r^k - \mu^k ) & \text{otherwise},
  \end{cases}
\end{equation}
where $\mu^k$ satisfies
\[
  \mu^k - r^k = \widetilde{F}( \prox_{\mu^k\widetilde{F}}(p^k,p^k,r^k) ).
\]
It follows from \eqref{eqn:nu_from_x} and \eqref{eqn:PKF-update} that $\kappa^k=r^k-\tau^k=r^k-\mu^k$, which means that $\mu^k=\tau^k$ and that the first case in \eqref{eqn:PKF-update} corresponds to $\tau^k=0$ and the second case to $\tau^k>0$.
We analyze these two cases separately.

\subsubsection{$\tau^k=0$}

Let us denote by
\[
  (\dot{x}^k,\dot{\nu}^k,\dot{\tau}^k) \eqdef \project{\closure{\dom}\widetilde{F}}(p^k,p^k,r^k).
\]
If $\tilde{F}(\dot{x}^k,\dot{\nu}^k,\dot{\tau}^k) \le -r^k$, then using \eqref{eqn:nu_from_x} and \eqref{eqn:PKF-update} it must be that
\[
  (x^k,\nu^k,\tau^k,\kappa^k) \gets (\dot{x}^k,\dot{\nu}^k,0,r^k),
\]
and thus $\dot{\tau}^k=0$ and
\[
  (\dot{x}^k,\dot{\nu}^k) = \project{\closure{\dom}(\recession{F})}(p^k,p^k).
\]
Hence, instead of considering the projection onto $\closure{\dom}\widetilde{F}$, we can focus on evaluating the projection onto $\closure{\dom}(\recession{F})$, \ie
\[
  \project{\closure{\dom}(\recession{F})}(p^k,p^k) = \big( \project{\closure{\dom}(\recession{f})}(p^k), \project{\closure{\dom}(\recession{f^*})}(p^k) \big).
\]

\paragraph{Projection onto domain of recession function.}

Setting
\[
  \hat{x}^k \gets \project{\closure{\dom}(\recession{f})}(p^k),
\]
we let $(\hat{x}^k,p^k-\hat{x}^k,0,r^k)$ be our candidate for $\project{\mcf{F}} (p^k,p^k,r^k,r^k)$.
To verify whether the candidate is indeed the projection of $(p^k,p^k,r^k,r^k)$ onto $\mcf{F}$, it must satisfy the following conditions:
\begin{align*}
  & (\hat{x}^k,p^k-\hat{x}^k,0,r^k) \in \mcf{F} \\
  & (p^k-\hat{x}^k,\hat{x}^k,r^k,0) \in \polar{\mcf{F}} \\
  & \innerprod{(\hat{x}^k,p^k-\hat{x}^k,0,r^k)}{(p^k-\hat{x}^k,\hat{x}^k,r^k,0)} = 0.
\end{align*}
Due to the structure of $\mcf{F}$ and $\polar{\mcf{F}}$, the first two conditions are equivalent and reduce to the following inequality:
\begin{equation}\label{eqn:recF-ineq}
  -r^k \ge (\recession{F})(\hat{x}^k,p^k-\hat{x}^k) = (\recession{f})(\hat{x}^k) + (\recession{f^*})(p^k-\hat{x}^k),
\end{equation}
while the third condition reduces to
\begin{equation}\label{eqn:cand-orth}
  \innerprod{\hat{x}^k}{p^k-\hat{x}^k} = 0,
\end{equation}
which is satisfied by construction due to Moreau's decomposition as $\hat{x}^k$ is the projection of $p^k$ onto a nonempty closed convex cone.

\paragraph{Projection onto recession cone of domain.}

Alternatively, we could set
\[
  \hat{\nu}^k \gets \project{\closure{\dom}(\recession{f^*})}(p^k),
\]
Using the following identity \cite[\App~A]{Banjac:2021}:
\[
  \closure{\dom}(\recession{f^*}) = \polar{(\recession{(\closure{\dom}f)})},
\]
along with the facts that the sets above are nonempty closed convex cones, and using Moreau's decomposition, we have
\[
  \hat{x}^k \gets p^k - \hat{\nu}^k
  = p^k - \project{\closure{\dom}(\recession{f^*})}(p^k)
  = p^k - \project{\polar{(\recession{(\closure{\dom}f)})}}(p^k)
  = \project{\recession{(\closure{\dom}f)}}(p^k).
\]
Similarly, we would need to confirm that the candidate $(\hat{x}^k,p^k-\hat{x}^k,0,r^k)$ is indeed the projection of $(p^k,p^k,r^k,r^k)$ onto $\mcf{F}$ by checking whether inequality \eqref{eqn:recF-ineq} is satisified.

\subsubsection{$\tau^k>0$}

If $(\recession{F})(\hat{x}^k,p^k-\hat{x}^k) > -r^k$, then it must be that $\tau^k>0$ and, due to \eqref{eqn:PKF-update}, we need to evaluate the proximity operator of $\mu^k\widetilde{F}$.
Using Proposition~\ref{prop:prox-ft} along with $\tau^k>0$, we need to solve the following system of equations:
\begin{align*}
  (x^k,\nu^k) &= \tau^k \prox_{(\mu^k/\tau^k)F}(p^k/\tau^k,p^k/\tau^k) \\
  \tau^k - r^k &= \mu^k F^* \big( \prox_{(\tau^k/\mu^k)F^*}(p^k/\mu^k,p^k/\mu^k) \big).
\end{align*}
Recalling that $\mu^k=\tau^k$ and eliminating $\nu^k$ via \eqref{eqn:nu_from_x}, we can reduce the above system to
\begin{align}\label{eqn:f-sys-eq}
\begin{split}
  x^k &= \tau^k \prox_f(p^k/\tau^k) \\
  \tau^k - r^k &= \tau^k f(x^k/\tau^k) + \tau^k f^*((p^k-x^k)/\tau^k) = \innerprod{x^k}{p^k-x^k} / \tau^k,
\end{split}
\end{align}
where the last equality follows from \eqref{x-tau-orth} (but also from \cite[\Lem~2.2(ii)]{Briceno-Arias:2024}).
As noted in \cite[Remark~3.1]{Briceno-Arias:2024c}, the unique solution to \eqref{eqn:f-sys-eq} is guaranteed to exist since it is equivalent to the following equality:
\[
  0 = \phi(\tau) \eqdef \tau - r^k - \widetilde{F} \big( \prox_{\tau\widetilde{F}}(p^k,p^k,r^k) \big),
\]
and due to \cite[\Lem~3.3]{Briceno-Arias:2024b}, $\widetilde{F} \big( \prox_{\tau\widetilde{F}}(p^k,p^k,r^k) \big)$ is continuous and decreasing in $\tau$ on $]0,+\infty[$, and thus $\phi$ is continuous, strictly increasing on $]0,+\infty[$, and
\begin{align*}
  \lim_{\tau\to 0} \phi(\tau) &= -r^k - \tilde{F} \big( \project{\closure{\dom}\widetilde{F}}(p^k,p^k,r^k) \big) < 0 \\
  \lim_{\tau\to \infty} \phi(\tau) &= +\infty.
\end{align*}

\subsection{Evaluating $\project{\mcf{G}}$}

Using similar arguments for the projection in \eqref{eqn:dra-hsde-simpler:PKg}, we let
\[
  \bar{x}^k \gets \project{\recession{(\closure{\dom}g)}}(\tilde{p}^k)
\]
and check the following inequality
\[
  -\tilde{r}^k \ge (\recession{G})(\bar{x}^k,\bar{x}^k-\tilde{p}^k) = (\recession{g})(\bar{x}^k) + (\recession{g^*})(\tilde{p}^k-\bar{x}^k).
\]
If the above inequality is not satisfied, then we proceed with solving the following system of equations:
\begin{align}\label{eqn:g-sys-eq}
\begin{split}
  \tilde{x}^k &= \tilde{\tau}^k \prox_g(\tilde{p}^k/\tilde{\tau}^k) \\
  \tilde{\tau}^k - \tilde{r}^k &= \tilde{\tau}^k g(\tilde{x}^k/\tilde{\tau}^k) + \tilde{\tau}^k g^*((\tilde{p}^k-\tilde{x}^k)/\tilde{\tau}^k) = \innerprod{\tilde{x}^k}{\tilde{p}^k-\tilde{x}^k} / \tilde{\tau}^k.
\end{split}
\end{align}

\begin{algorithm}[t]
\caption{DRA for HSDE of problem~\eqref{eqn:primal}.}
\label{alg:dra-hsde}
\begin{algorithmic}[1]
  \State {\bf given} initial values $p^1\in\mcf{H}$, $r^1\in\Re$
  \State $k \gets 1$
  \Repeat
    \State $\hat{x}^k \gets \project{\recession{(\closure{\dom}f)}}(p^k)$ or $\project{\closure{\dom}(\recession{f})}(p^k)$ \label{alg:dra-hsde:hat-x}
    \State $(x^k,\tau^k) \gets \begin{cases}
        (\hat{x}^k,0) & (\recession{f})(\hat{x}^k) + (\recession{f^*})(p^k-\hat{x}^k) \le -r^k \\
        \text{solve~\eqref{eqn:f-sys-eq}} & \text{otherwise}
      \end{cases}$ \label{alg:dra-hsde:x}
    \State $(\tilde{p}^k,\tilde{r}^k) \gets 2(x^k,\tau^k) - (p^k,r^k)$
    \State $\bar{x}^k \gets \project{\recession{(\closure{\dom}g)}}(\tilde{p}^k)$ or $\project{\closure{\dom}(\recession{g})}(\tilde{p}^k)$ \label{alg:dra-hsde:bar-x}
    \State $(\tilde{x}^k,\tilde{\tau}^k) \gets \begin{cases}
        (\bar{x}^k,0) & (\recession{g})(\bar{x}^k) + (\recession{g^*})(\tilde{p}^k-\bar{x}^k) \le -\tilde{r}^k \\
        \text{solve~\eqref{eqn:g-sys-eq}} & \text{otherwise}
      \end{cases}$ \label{alg:dra-hsde:tilde-x}
    \State $(p^{k+1},r^{k+1}) \gets (p^k,r^k) + (\tilde{x}^k,\tilde{\tau}^k) - (x^k,\tau^k)$
    \State $k \gets k+1$
  \Until{termination criterion is satisfied}
\end{algorithmic}
\end{algorithm}

\subsection{Final algorithm}

We summarize iteration \eqref{eqn:dra-hsde-simpler} in Algorithm~\ref{alg:dra-hsde} using the procedure for evaluating $\project{\mcf{F}}$ and $\project{\mcf{G}}$ outlined in the preceding sections.
Observe that in Steps \ref{alg:dra-hsde:hat-x} and \ref{alg:dra-hsde:bar-x} we have two ways of evaluating each of the candidates $\hat{x}^k$ and $\bar{x}^k$.
In practice, we would choose whichever variant is easier to evaluate.
Note, however, that the algorithm produces the same iterates $(x^k,\tau^k)$ regardless of how $\hat{x}^k$ is evaluated as $\project{\recession{(\closure{\dom}f)}}(p^k) \ne \project{\closure{\dom}(\recession{f})}(p^k)$ only if $\tau^k>0$, in which case the inequality in \eqref{eqn:recF-ineq} will not be satisfied and we will proceed with solving the system of equations in \eqref{eqn:f-sys-eq}.

\subsection{Convergence}

Since Algorithm~\ref{alg:dra-hsde} is DRA applied to problem~\eqref{eqn:hsde-composite}, whose solution set is a nonempty closed convex cone, its convergence follows from the general convergence result established in the seminal paper by Lions and Mercier \cite{Lions:1979}.
Since all iterates of Algorithm~\ref{alg:dra-hsde} satisfy \eqref{eqn:nu_from_x}, it also holds for the algorithm's fixed point $(p^\star,r^\star)$, \ie
\[
  p^\star = x^\star + \nu^\star, \quad
  r^\star = \tau^\star + \kappa^\star.
\]
We next show that the algorithm will not converge to zero (which is always a solution to the embedding) if the algorithm is initialized appropriately.

\subsubsection{Eliminating convergence to zero}\label{sec:elim-cvg-to-zero}

The following lemma, which is originally stated in a Euclidean space, but extends trivially to a general real Hilbert space, also applies to our iteration~\eqref{eqn:dra-hsde-simpler}.

\begin{lemma}[{\cite[\Lem~5.1]{O'Donoghue:2021}}]
  Let $w^1\in\mcf{H}$ and consider the sequence $w^{k+1}=T(w^k)$ for $k\in\Nat$ generated by $T:\mcf{H}\to\mcf{H}$.
  If
  \begin{enumerate}
    \item $T$ is positively homogeneous, \ie $T(tv)=tT(v)$ for any $t>0$, $v\in\mcf{H}$,
    \item $T$ has a nonzero fixed point $w^\star\in\mcf{H}$ which satisfies $\innerprod{w^\star}{w^1} > 0$,
    \item $T$ is nonexpansive toward any fixed point, \ie $\norm{T(v)-w^\star} \le \norm{v-w^\star}$ for any $v\in\mcf{H}$,
  \end{enumerate}
  then for all $k\in\Nat$,
  \[
    \norm{w^k} \ge \innerprod{w^\star}{w^1} / \norm{w^\star} > 0.
  \]
\end{lemma}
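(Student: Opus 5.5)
The plan is to exploit positive homogeneity so that every scaled copy $t w^\star$ with $t>0$ is also a fixed point of $T$. Nonexpansiveness toward each of these fixed points then gives a family of distance bounds, and a suitable choice of $t$ converts them into a lower bound on $\norm{w^k}$.

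First, by condition 1, for any $t>0$ we have $T(tw^\star) = tT(w^\star) = tw^\star$, so $tw^\star$ is a fixed point. I will read condition 3 as holding for every fixed point, which is how the statement phrases it (\emph{toward any fixed point}); this applies in particular to $tw^\star$. Iterating gives
\[
  \norm{w^k - tw^\star} \le \norm{w^1 - tw^\star} \quad \text{for all } k\in\Nat,\ t>0 .
\]

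Second, I expand both sides of this inequality. The term $t^2\norm{w^\star}^2$ appears on each side and cancels, leaving
\[
  \norm{w^k}^2 - 2t\innerprod{w^k}{w^\star} \le \norm{w^1}^2 - 2t\innerprod{w^1}{w^\star}.
\]
Dividing by $2t$ and rearranging yields
\[
  \innerprod{w^k}{w^\star} \ge \innerprod{w^1}{w^\star} + \frac{\norm{w^k}^2 - \norm{w^1}^2}{2t} \ge \innerprod{w^1}{w^\star} - \frac{\norm{w^1}^2}{2t}.
\]
Letting $t\to\infty$ gives $\innerprod{w^k}{w^\star} \ge \innerprod{w^1}{w^\star}$.

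Third, Cauchy--Schwarz gives $\norm{w^k}\,\norm{w^\star} \ge \innerprod{w^k}{w^\star} \ge \innerprod{w^\star}{w^1}$. Since $w^\star \ne 0$, dividing by $\norm{w^\star}$ gives the claimed bound $\norm{w^k} \ge \innerprod{w^\star}{w^1}/\norm{w^\star}$, and this is strictly positive by condition 2.

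The main obstacle is the interpretation of condition 3. If it were meant only for the single fixed point $w^\star$, the argument would break, because the scaling $t\to\infty$ is essential. For iteration~\eqref{eqn:dra-hsde-simpler} this is harmless: DRA is firmly nonexpansive, hence nonexpansive toward every fixed point, and the fixed-point set is a cone. The extension from a Euclidean space to $\mcf{H}$ is immediate, since the argument uses only inner-product identities.
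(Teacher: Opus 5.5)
Your proof is correct. The paper gives no proof of its own; it cites \cite[\Lem~5.1]{O'Donoghue:2021}, and your route is the standard argument for that result: every $tw^\star$ with $t>0$ is a fixed point, you expand $\norm{w^k-tw^\star}^2\le\norm{w^1-tw^\star}^2$, let $t\to\infty$ to get $\innerprod{w^k}{w^\star}\ge\innerprod{w^1}{w^\star}$, and finish with Cauchy--Schwarz.

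Your closing worry is unfounded, however. Even if condition~3 is read only for the single point $w^\star$, homogeneity gives $\norm{T(v)-tw^\star}=t\norm{T(v/t)-w^\star}\le t\norm{v/t-w^\star}=\norm{v-tw^\star}$ for all $t>0$ and all $v$. So nonexpansiveness toward every $tw^\star$ follows automatically, and the argument works under either reading.
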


Let $T\in\mcf{H}\times\mcf{H}\times\Re^2\to\mcf{H}\times\mcf{H}\times\Re^2$ be the operator mapping $(p^k,p^k,r^k,r^k)$ to $(p^{k+1},p^{k+1},r^{k+1},r^{k+1})$ in iteration~\eqref{eqn:dra-hsde-simpler}.
It is easy to show that the operator is positively homogeneous since $\mcf{F}$ and $\mcf{G}$ are nonempty closed convex cones, and their projection operators are known to be positively homogeneous \cite[\Prop~29.29]{Bauschke:2017:book}.
Nonexpansiveness of the DRA operator is also well known \cite{Lions:1979}.
Hence, if the initial iterate is chosen such that $\innerprod{p^\star}{p^1} + r^\star r^1 > 0$, then $(p^k,r^k)$ will be bounded away from zero for all iterations $k\in\Nat$.

The authors in \cite{O'Donoghue:2016,O'Donoghue:2021} have shown that in their setup setting $p^1=0$ and $r^1=1$ would yield
\[
  \innerprod{p^\star}{p^1} + r^\star r^1 = r^\star = \tau^\star + \kappa^\star > 0
\]
as long as their exists a solution to the embedding at which either $\tau^\star$ or $\kappa^\star$ is nonzero.
This follows from the fact that in their setting both $\tau^\star$ and $\kappa^\star$ are guaranteed to be nonnegative.

We could use the same argument if $\widetilde{G}(x,\nu,\tau)$ is known to be lower-bounded by zero, \ie
\begin{equation}\label{eqn:Gtilde-nonneg}
  \forall (x,\nu,\tau) \quad  0 \le \widetilde{G}(x,\nu,\tau).
\end{equation}
Since any solution $(x^\star,\nu^\star,\tau^\star,\kappa^\star)$ to \eqref{eqn:hsde} lies in $\mcf{G}$, \ie
\[
  \widetilde{G}(x^\star,\nu^\star,\tau^\star) \le \kappa^\star,
\]
it follows that \eqref{eqn:Gtilde-nonneg} implies $\kappa^\star \ge 0$.
Hence, if there exists a solution to the embedding where either $\tau^\star\ge 0$ or $\kappa^\star\ge 0$ is nonzero, then the iterates $(p^k,r^k)$ will be bounded away from zero for all iterations $k\in\Nat$.

\subsection{Termination criteria}

Algorithm~\ref{alg:dra-hsde} can be seen as a projection method that seeks a point in the intersection of the sets $\mcf{F}$ and $\mcf{G}$ defined in \eqref{eqn:KF-KG}.
Hence, we can terminate the algorithm when the distance between $(x^k,\nu^k,\tau^k,\kappa^k)\in\mcf{F}$ and $(\tilde{x}^k,\tilde{\nu}^k,\tilde{\tau}^k,\tilde{\kappa}^k)\in\mcf{G}$ is small enough with respect to the magnitudes of these iterates.

Alternatively, we can define a termination criterion that is more specifically related to the violation of optimality and infeasibility conditions given in \eqref{eqn:zero-duality-gap} and \eqref{eqn:infeas-strict-ineq}, respectively.
For instance, let $(\tilde{x}^k,\tilde{\nu}^k,\tilde{\tau}^k,\tilde{\kappa}^k)\in\mcf{G}$ be our candidate solution to \eqref{eqn:hsde}.
Assuming $\tilde{\tau}^k>0$, we may want to evaluate the duality gap at this candidate solution, \ie
\[
  |f(\tilde{x}^k/\tilde{\tau}^k) + g(\tilde{x}^k/\tilde{\tau}^k) + f^*(\tilde{\nu}^k/\tilde{\tau}^k) + g^*(-\tilde{\nu}^k/\tilde{\tau}^k)|
\]
and terminate the algorithm when the duality gap is small enough.
Although it follows from \eqref{eqn:g-sys-eq} that
\begin{align*}
  (\tilde{x}^k/\tilde{\tau}^k) &\in \dom g \\
  (-\tilde{\nu}^k/\tilde{\tau}^k) &\in \dom g^*,
\end{align*}
we can still have
\begin{align*}
  (\tilde{x}^k/\tilde{\tau}^k) &\notin \dom f \\
  (\tilde{\nu}^k/\tilde{\tau}^k) &\notin \dom f^*,
\end{align*}
even for iterates that are arbitrarily close to these domains, which would result in the infinite duality gap.
A better way would be to evaluate the distance between $(\tilde{x}^k/\tilde{\tau}^k,\tilde{\nu}^k/\tilde{\tau}^k)$ and $\dom f \times \dom f^*$, and have some proxy for the duality gap evaluation, \eg we could evaluate $f$ and $f^\star$ at 
$x^k/\tau^k$ and $\nu^k/\tau^k$, respectively (assuming $\tau^k>0$), and require that the distance between $(\tilde{x}^k/\tilde{\tau}^k,\tilde{\nu}^k/\tilde{\tau}^k)$ and $(x^k/\tau^k,\nu^k/\tau^k)$ is small enough.

Similarly, we could let $(\tilde{x}^k,\tilde{\nu}^k)$ be our candidate for an infeasibility certificate and evaluate the violation of the inequalities in \eqref{eqn:infeas-strict-ineq}.
Again, we may want to first compute the distance between the candidate and the function domain, and then define a good proxy for the evaluation of the left-hand sides in \eqref{eqn:infeas-strict-ineq}.

We will show in \S\ref{sec:numerics} how we can define optimality and infeasibility residuals for a particular problem class introduced in \S\ref{sec:structured-cp}.

\section{Quadratic Cone Programming}\label{sec:qcp}

Consider the following quadratic cone program:
\begin{equation}\label{eqn:qcp-primal}
  \begin{array}{ll}
    \text{minimize} & \half x^T P x + c^T x \\
    \textrm{subject to} & Ax + z = b \\
    & z\in\mcf{K},
  \end{array}
  \tag{QCP}
\end{equation}
where $x\in\Re^n$ and $z\in\Re^m$ are optimization variables.
Problem data are given by matrices $P\in\symm_+^n$ and $A\in\Re^{m\times n}$, vectors $c\in\Re^n$ and $b\in\Re^m$, and a nonempty closed convex cone $\mcf{K}\subseteq\Re^m$.
Defining
\begin{align*}
  f(x,z) &= \half x^T P x + c^T x + \indicator{Ax+z=b}(x,z) \\
  g(x,z) &= \indicator{\mcf{K}}(z),
\end{align*}
we can represent problem~\eqref{eqn:qcp-primal} in form~\eqref{eqn:primal}.
The associated conjugate and recession functions are given in Lemma~\ref{lem:quad-affine-conj-rec} and Corollary~\ref{cor:cone-ind-conj-rec}.

\subsection{Evaluating algorithm steps: handling function $f$}\label{subsec:qcp:quad}

Evaluating Step~\ref{alg:dra-hsde:hat-x} of Algorithm~\ref{alg:dra-hsde} amounts to projecting $(p^k,s^k)$ onto $\recession{(\closure{\dom}f)}$, \ie
\[
  \begin{array}{ll}
    \text{minimize} & \half \norm{x-p^k}_2^2 + \half \norm{z-s^k}_2^2 \\
    \textrm{subject to} & Ax + z = 0,
  \end{array}
\]
which yields
\begin{equation}\label{eqn:qcp-linsys-proj}
  \begin{bmatrix} I & A^T \\ -A & I \end{bmatrix} \begin{bmatrix} \hat{x}^k \\ \hat{y}^k \end{bmatrix} = \begin{bmatrix} p^k \\ s^k \end{bmatrix},
\end{equation}
where $(\hat{w}^k,\hat{y}^k) = (p^k,s^k) - (\hat{x}^k,\hat{z}^k)$.
Evaluating Step~\ref{alg:dra-hsde:x} of Algorithm~\ref{alg:dra-hsde} requires first checking the following inequality:
\begin{equation}\label{eqn:qcp-ineq}
  -r_n \ge (\recession{f})(\hat{x}^k,\hat{z}^k) + (\recession{f^*})(p^k-\hat{x}^k,s^k-\hat{z}^k) = c^T \hat{x}^k + \indicator{\ker{P}}(\hat{x}^k) + b^T\hat{y}^k,
\end{equation}
where we removed indicator functions that equal to zero by construction due to \eqref{eqn:qcp-linsys-proj}.
If the inequality is satisfied, then $(x^k,y^k,\tau^k)=(\hat{x}^k,\hat{y}^k,0)$; otherwise, we need to solve \eqref{eqn:f-sys-eq}, which amounts to solving the following system of equations:
\begin{align}\label{eqn:qcp-Pf}
\begin{split}
  (M+I) \begin{bmatrix} x^k \\ y^k \end{bmatrix} &= \begin{bmatrix} p^k \\ s^k \end{bmatrix} - \tau^k \begin{bmatrix} c \\ b \end{bmatrix} \\
  \tau^k - r^k &= (p^k - x^k)^T x^k / \tau^k + (s^k-y^k)^T y^k / \tau^k,
\end{split}
\end{align}
where
\[
  M \eqdef \begin{bmatrix} P & A^T \\ -A & 0 \end{bmatrix}.
\]
Defining
\[
  v^k \eqdef (M+I)\inv \begin{bmatrix} p^k \\ s^k \end{bmatrix}
  \quad\text{and}\quad
  q \eqdef (M+I)\inv \begin{bmatrix} c \\ b \end{bmatrix},
\]
the solution to \eqref{eqn:qcp-Pf} satisfies the following equality:
\[
  \begin{bmatrix} x^k \\ y^k \end{bmatrix} = v^k - \tau^k q,
\]
where $\tau^k$ solves the following quadratic equation:
\[
  \underbrace{(1 + q^T q)}_{a} \tau^2 + \underbrace{(q^T \begin{bmatrix} p^k \\ s^k \end{bmatrix} - 2 q^T v^k - r^k)}_{b^k} \tau + \underbrace{(v^k - \begin{bmatrix} p^k \\ s^k \end{bmatrix})^T v^k}_{c^k} = 0.
\]
Since $a \eqdef 1 + q^T q > 0$ and
\[
  c^k \eqdef (v^k - \begin{bmatrix} p^k \\ s^k \end{bmatrix})^T v^k = -(v^k)^T M v^k \le 0,
\]
it follows that the discriminant of the quadratic equation is nonnegative and thus its roots are real.
Moreover, since $-4 a c^k \ge 0$, it follows that one root must be nonnegative and one must be nonpositive.
As we are interested in the nonnegative root, we have
\[
  \tau^k \gets \texttt{root}_+(p^k,s^k,r^k,v^k,q) \eqdef \big( -b^k + \sqrt{(b^k)^2 - 4ac^k} \big) / (2a).
\]

\subsubsection{Avoiding two matrix factorizations}

It seems from \eqref{eqn:qcp-linsys-proj} and \eqref{eqn:qcp-Pf} that computing $(x^k,y^k)$ requires solving two linear systems with different coefficient matrices.
However, if the nonnegative root of the quadratic equation yields $\tau^k=0$, then we must have
\[
  0 = c^k = - (v^k)^T M v^k = -(v_x^k)^T P v_x^k,
\]
which implies that $P v_x^k = 0$ and thus
\[
  v^k = (M+I)\inv \begin{bmatrix} p^k \\ s^k \end{bmatrix} = \begin{bmatrix} I & A^T \\ -A & I \end{bmatrix}\inv \begin{bmatrix} p^k \\ s^k \end{bmatrix} = \begin{bmatrix} \hat{x}^k \\ \hat{y}^k \end{bmatrix}.
\]
In other words, if $\tau^k=0$, then $(x^k,y^k)=v^k$ and there is no need to solve \eqref{eqn:qcp-linsys-proj} as we can use $v^k$ as a candidate for $(\hat{x}^k,\hat{y}^k)$.

\subsection{Evaluating algorithm steps: handling function $g$}

Evaluating Step~\ref{alg:dra-hsde:bar-x} of Algorithm~\ref{alg:dra-hsde} amounts to projecting $\tilde{s}^k$ onto $\mcf{K}$, \ie
\[
  (\bar{x}^k, \bar{z}^k) \gets \big( \tilde{p}^k, \project{\mcf{K}}(\tilde{s}^k) \big).
\]
Evaluating Step~\ref{alg:dra-hsde:tilde-x} of Algorithm~\ref{alg:dra-hsde} requires first checking the following inequality:
\[
  -\tilde{r}^k \ge (\recession{g})(\bar{x}^k,\bar{z}^k) + (\recession{g^*})(\tilde{p}^k-\bar{x}^k,\tilde{s}^k-\bar{z}^k) = \indicator{\mcf{K}}(\bar{z}^k) + \indicator{\{0\}}(-\bar{w}^k) + \indicator{\polar{\mcf{K}}}(-\bar{y}^k).
\]
Note that the right-hand side of the inequality equals zero by construction as $-\bar{y}^k = \tilde{s}^k - \bar{z}^k = \project{\polar{\mcf{K}}}(\tilde{s}^k)$ and $-\bar{w}^k = \tilde{p}^k - \bar{x}^k = 0$.
Hence, the inequality is satisfied if $\tilde{r}^k \le 0$, which yields
\[
  (\tilde{x}^k,\tilde{y}^k,\tilde{\tau}^k) \gets \big( \tilde{p}^k,-\project{\polar{\mcf{K}}}(\tilde{s}^k),0 \big).
\]
Otherwise, if $\tilde{r}^k > 0$, we need to solve \eqref{eqn:g-sys-eq}, which yields
\[
  (\tilde{x}^k,\tilde{y}^k,\tilde{\tau}^k) \gets \big( \tilde{p}^k,-\project{\polar{\mcf{K}}}(\tilde{s}^k),\tilde{r}^k \big).
\]
In other words, evaluating Step~\ref{alg:dra-hsde:tilde-x} of Algorithm~\ref{alg:dra-hsde} can be summarized as
\[
  (\tilde{x}^k,\tilde{y}^k,\tilde{\tau}^k) \gets \big( \tilde{p}^k,-\project{\polar{\mcf{K}}}(\tilde{s}^k),\max(\tilde{r}^k,0) \big).
\]
Since $(\tilde{p}^k,\tilde{s}^k,\tilde{r}^k) = 2(x^k,z^k,\tau^k) - (p^k,s^k,r^k)$, we can write
\begin{align*}
  \tilde{x}^k &\gets 2x^k - p^k \\
  \tilde{y}^k &\gets -\project{\polar{\mcf{K}}}(2z^k - s^k) = -\project{\polar{\mcf{K}}}\big(-(2y^k - s^k)\big) = \project{\dual{\mcf{K}}}(2y^k - s^k) \\
  \tilde{\tau}^k &\gets \max(2\tau^k-r^k,0),
\end{align*}
where we used $z^k = s^k - y^k$.

\begin{algorithm}[t]
\caption{SCS algorithm.}
\label{alg:dra-hsde-qcp}
\begin{algorithmic}[1]
  \State {\bf given} initial values $p^1\in\Re^n$, $s^1\in\Re^m$, $r^1\in\Re$
  \State {\bf compute} $q=(M+I)\inv (c,b)$
  \State $k \gets 1$
  \Repeat
    \State $v^k \gets (M+I)\inv (p^k,s^k)$
    \State $\tau^k \gets \texttt{root}_+(p^k, s^k, r^k, v^k, q)$
    \State $(x^k,y^k) \gets v^k - \tau^k q$
    \State $\tilde{x}^k \gets 2 x^k - p^k$
    \State $\tilde{y}^k \gets \project{\dual{\mcf{K}}} (2y^k-s^k)$
    \State $\tilde{\tau}^k \gets \max(2\tau^k-r^k,0)$
    \State $(p^{k+1},s^{k+1},r^{k+1}) \gets (p^k,s^k,r^k) + (\tilde{x}^k,\tilde{y}^k,\tilde{\tau}^k) - (x^k,y^k,\tau^k)$
    \State $k \gets k+1$
  \Until{termination criterion is satisfied}
\end{algorithmic}
\end{algorithm}

\subsection{SCS algorithm}
We summarize the DRA for HSDE of problem \eqref{eqn:qcp-primal} in Algorithm~\ref{alg:dra-hsde-qcp}, which is exactly the SCS algorithm proposed in \cite[\Alg~5.1]{O'Donoghue:2021}.
Note that we have arrived to the same algorithm taking a very different path.
The derivation of the algorithm conducted in \cite{O'Donoghue:2021} cannot easily generalize to non-conic constraints or non-quadratic objective functions as it is based on representing the problem as a linear complementarity problem.
In contrast, our derivation applies to optimization problems with non-conic constraints and non-quadratic objective functions, which we demonstrate in the following section.

\begin{remark}
  Since $g$ is the indicator function of a nonempty closed convex cone, it is easy to show that \eqref{eqn:Gtilde-nonneg} holds, which ensures that the iterates of Algorithm~\ref{alg:dra-hsde-qcp} are bounded away from zero if the initial iterate is set to $(p^1,s^1,r^1)=(0,0,1)$.
\end{remark}

\section{Structured Convex Optimization}\label{sec:structured-cp}

We are interested in solving the following convex optimization problem:
\begin{equation}\label{eqn:sqp}
\begin{array}{ll}
	\text{minimize} & \half x^T P x + c^T x + d^T |x| \\
	\mbox{subject to} & A x - b \in [l,u]
\end{array}
\end{equation}
where $x\in\Re^n$ is the optimization variable.
Problem data are given by matrices $P\in\symm_+^n$ and $A\in\Re^{m\times n}$, vectors $c\in\Re^n$, $d\in\Re_+^n$, and $b\in\Re^m$, and nonempty set $[l,u]\subseteq\Re^m$, where elements of $l$ can take value $-\infty$, and elements of $u$ can take value $+\infty$.

Defining
\begin{align*}
  f(x,z) &= \half x^T P x + c^T x + \indicator{-Ax+z=-b}(x,z) \\
  g(x,z) &= \support{[-d,d]}(x) + \indicator{[l,u]}(z),
\end{align*}
we can represent problem~\eqref{eqn:sqp} in form~\eqref{eqn:primal}.

\subsection{Optimality and infeasibility conditions}

Using Lemma~\ref{lem:quad-affine-conj-rec} and Lemma~\ref{lem:set-ind-conj-rec}, optimality conditions for \eqref{eqn:sqp} are given by
\begin{subequations}\label{eqn:sqp-opt-cond}
\begin{align}
  z &\in [l,u] \label{eqn:sqp-opt-cond:pfeas1} \\
  0 &= Ax - b - z \label{eqn:sqp-opt-cond:pfeas2} \\
  -w &\in [-d,d], \quad -y\in\polar{(\recession{[l,u]})} \label{eqn:sqp-opt-cond:dfeas1} \\
  0 &= P\lambda + c - w - A^T y \label{eqn:sqp-opt-cond:dfeas2} \\
  0 &= \half x^T P x + c^T x + d^T |x| + \half \lambda^T P \lambda - b^T y + \support{[l,u]}(-y), \label{eqn:sqp-opt-cond:dgap}
\end{align}
\end{subequations}
where \eqref{eqn:sqp-opt-cond:pfeas1}--\eqref{eqn:sqp-opt-cond:pfeas2} represent primal feasibility, \eqref{eqn:sqp-opt-cond:dfeas1}--\eqref{eqn:sqp-opt-cond:dfeas2} dual feasibility, and \eqref{eqn:sqp-opt-cond:dgap} zero duality gap.
Using \eqref{eqn:infeas-strict-ineq} along with Lemma~\ref{lem:quad-affine-conj-rec} and Lemma~\ref{lem:set-ind-conj-rec}, primal infeasibility conditions for \eqref{eqn:sqp} are given by
\begin{subequations}\label{eqn:qp-pinf-cond}
\begin{align}
  -\bar{y} &\in \polar{(\recession{[l,u]})} \\
  0 &> \support{[l,u]}(-\bar{y}) - b^T \bar{y} \\
  0 &= A^T \bar{y}
\end{align}
\end{subequations}
and dual infeasibility conditions are
\begin{subequations}\label{eqn:qp-dinf-cond}
\begin{align}
  \bar{z} &\in \recession{[l,u]} \\
  0 &> c^T \bar{x} + d^T |\bar{x}| \\
  0 &= P \bar{x} \\
  0 &= A \bar{x} - \bar{z}.
\end{align}
\end{subequations}

\subsection{Evaluating algorithm steps}

Note that evaluating Steps \ref{alg:dra-hsde:hat-x}--\ref{alg:dra-hsde:x} of Algorithm~\ref{alg:dra-hsde} can be done in the same way as shown in Section~\ref{subsec:qcp:quad} if we replace $(b,A)$ with $(-b,-A)$.
Computing $\bar{z}^k$ in Step~\ref{alg:dra-hsde:bar-x} amounts to projecting $\tilde{s}^k$ onto $\recession{[l,u]}$, \ie
\[
  \bar{z}^k_j = \project{\recession{[l_j,u_j]}} ( \tilde{s}^k_j )
  = \begin{cases}
    0 & l_j\in\Re, \, u_j\in\Re \\
    \max ( \tilde{s}^k_j, 0 ) & l_j\in\Re, \, u_j=+\infty \\
    \min ( \tilde{s}^k_j, 0 ) & l_j = -\infty, \, u_j\in\Re \\
    \tilde{s}^k_j & l_j = -\infty, \, u_j=+\infty.
  \end{cases}
\]
Evaluating Step~\ref{alg:dra-hsde:tilde-x} requires first checking the following inequality:
\[
  -\tilde{r}^k \ge d^T |\tilde{p}^k| + \support{[l,u]}(\tilde{s}^k-\bar{z}^k).
\]
If the inequality is satisfied, then $(\tilde{x}^k,\tilde{z}^k,\tilde{\tau}^k)=(\tilde{p}^k,\bar{z}^k,0)$; otherwise, we know that $\tilde{\tau}^k>0$ and we need to solve \eqref{eqn:g-sys-eq}, which yields the following system of equations:
\begin{align}\label{eqn:prox_box}
\begin{split}
  -\tilde{w}^k &= \tilde{\tau}^k \project{[-d,d]}(\tilde{p}^k/\tilde{\tau}^k) \\
  \tilde{z}^k &= \tilde{\tau}^k \project{[l,u]}(\tilde{s}^k/\tilde{\tau}^k) \\
  \tilde{\tau}^k - \tilde{r}^k &= -(\tilde{p}^k+\tilde{w}^k)^T \tilde{w}^k / \tilde{\tau}^k + (\tilde{s}^k-\tilde{z}^k)^T \tilde{z}^k / \tilde{\tau}^k,
\end{split}
\end{align}
where $\tilde{x}^k=\tilde{p}^k + \tilde{w}^k$.
Observe that, if $\tilde{s}^k_j/\tilde{\tau}^k\in[l_j,u_j]$, then $\tilde{z}^k_j=\tilde{s}^k_j$ and thus
\[
  (\tilde{s}^k_j - \tilde{z}^k_j \big) \tilde{z}^k_j / \tilde{\tau}^k = 0.
\]
On the other hand, if $\tilde{s}^k_j/\tilde{\tau}^k > u_j$, then $\tilde{z}^k_j=\tilde{\tau}^k u_j$ and thus
\[
  ( \tilde{s}^k_j - \tilde{z}^k_j ) \tilde{z}^k_j / \tilde{\tau}^k = ( \tilde{s}^k_j - \tilde{\tau}^k u_j ) u_j.
\]
Note that similar observations hold for $\tilde{w}^k$.
Defining sets of indices corresponding to lower- and upper-active bounds as
\begin{align*}
  \mcf{L}_w(\tilde{\tau}) &\eqdef \{ i \in \{ 1,\ldots,n\} \mid \tilde{p}^k_i/\tilde{\tau} < -d_i \} \\
  \mcf{U}_w(\tilde{\tau}) &\eqdef \{ i \in \{ 1,\ldots,n\} \mid \tilde{p}^k_i/\tilde{\tau} > d_i \} \\
  \mcf{L}_z(\tilde{\tau}) &\eqdef \{ j \in \{ 1,\ldots,m\} \mid \tilde{s}^k_j/\tilde{\tau} < l_j \} \\
  \mcf{U}_z(\tilde{\tau}) &\eqdef \{ j \in \{ 1,\ldots,m\} \mid \tilde{s}^k_j/\tilde{\tau} > u_j \}
\end{align*}
yields
\begin{align*}
  0 = \phi(\tilde{\tau}) = \tilde{\tau} - \tilde{r}^k
  & + \sum_{i\in\mcf{L}_w(\tilde{\tau})} d_i \big( \tilde{\tau} d_i + \tilde{p}^k_i \big)
  + \sum_{i\in\mcf{U}_w(\tilde{\tau})} d_i \big( \tilde{\tau} d_i - \tilde{p}^k_i \big) \\
  &+ \sum_{j\in\mcf{L}_z(\tilde{\tau})} l_j \big( \tilde{\tau} l_j - \tilde{s}^k_j \big)
  + \sum_{j\in\mcf{U}_z(\tilde{\tau})} u_j \big( \tilde{\tau} u_j - \tilde{s}^k_j \big),
\end{align*}
where $\phi:\Re_+\to\Re$ is a continuous strictly increasing piecewise affine function.
Assuming the active sets will not change, the zero of $\phi$ can be computed via
\[
  \tilde{\tau} = \frac{ \tilde{r}^k
  - \sum_{i\in\mcf{L}_w(\tilde{\tau})} \tilde{p}^k_i d_i
  + \sum_{i\in\mcf{U}_w(\tilde{\tau})} \tilde{p}^k_i d_i
  + \sum_{j\in\mcf{L}_z(\tilde{\tau})} \tilde{s}^k_j l_j
  + \sum_{j\in\mcf{U}_z(\tilde{\tau})} \tilde{s}^k_j u_j
  }{
    1
    + \sum_{i\in\mcf{L}_w(\tilde{\tau})} (d_i)^2
    + \sum_{i\in\mcf{U}_w(\tilde{\tau})} (d_i)^2
    + \sum_{j\in\mcf{L}_z(\tilde{\tau})} (l_j)^2
    + \sum_{j\in\mcf{U}_z(\tilde{\tau})} (u_j)^2
  }.
\]
If the expression above produces $\tilde{\tau} \le 0$ or a $\tilde{\tau}$ for which the active sets changed, then we need to iterate with a new candidate $\tilde{\tau}$.
Since the algorithm solves a sequence of similar problems with parameter $(\tilde{p},\tilde{s})$ that does not change too much between iterations $k$ and $k+1$ (especially when close to solution), we can use warm-starting to obtain a good initial guess for $\tilde{\tau}$.
Note that for every candidate $\tilde{\tau}$ we need to project $(\tilde{p}^k,\tilde{s}^k)/\tilde{\tau}$ onto $[-d,d]\times[l,u]$.

\begin{remark}
  The SCS solver handles box constraints on optimization variable $z$ by representing them as the intersection of the following sets \cite[\S6.2]{O'Donoghue:2021}:
  \[
    \mcf{K}_\text{box} \eqdef \lbrace (t,z) \in \Re \times \Re^m \mid tl \le z \le tu, \: t \ge 0 \rbrace
    \quad\text{and}\quad
    \lbrace (t,z) \in \Re \times \Re^m \mid t = 1 \rbrace.
  \]
  Projection onto $\mcf{K}_\text{box}$ can be done via Newton's method on the scalar variable $t$.
  Interestingly, this method yields a procedure equivalent to the one described above for solving the system of equations in \eqref{eqn:prox_box} when $d=0$.
\end{remark}

\begin{remark}
  If $0\notin[l,u]$, then $\support{[l,u]}(y)$ is not necessarily nonnegative, and thus we cannot guarantee that $\kappa^\star\ge 0$, which we used in \S\ref{sec:elim-cvg-to-zero} to ensure that the iterates of Algorithm~\ref{alg:dra-hsde} are bounded away from zero.
  However, for any $e\in[l,u]$ we can reformulate problem~\eqref{eqn:sqp} as
  \[
    \begin{array}{ll}
      \text{minimize} & \half x^T P x + c^T x + d^T |x| \\
      \mbox{subject to} & A x - (b+e) \in [l-e,u-e],
    \end{array}
  \]
  where $0\in[l-e,u-e]$.
\end{remark}

\section{Numerical Results}\label{sec:numerics}

Consider the following parametric problem:
\begin{equation}\label{eqn:parametric-qp}
  \begin{array}{ll}
    \text{minimize} & \half x_1^2 + x_1 - x_2 + \half |x_1| + \half |x_2| \\
    \textrm{subject to} & 0 \le x_1 + a x_2 \le u^z \\
    & 1 \le x_1 \le 3 \\
    & 1 \le x_2 \le u^x_2,
  \end{array}
\end{equation}
where $x_1\in\Re$ and $x_2\in\Re$ are optimization variables, and $a\in\Re$, $u^x_2 \ge 1$, and $u^z\ge 0$ are parameters.
A similar problem is considered in \cite[\S 6.1]{Banjac:2019} with the difference that we added the absolute value terms to the objective function.
The problem can be represented in form \eqref{eqn:sqp} with
\[
  P = \begin{bmatrix} 1 & 0 \\ 0 & 0 \end{bmatrix}, \:\:
  c = \begin{bmatrix} 1 \\ -1 \end{bmatrix}, \:\:
  d = \begin{bmatrix} 0.5 \\ 0.5 \end{bmatrix}, \:\:
  A = \begin{bmatrix}
    1 & a \\
    1 & 0 \\
    0 & 1 \\
  \end{bmatrix}, \:\:
  b = \begin{bmatrix} 0 \\ 1 \\ 1 \end{bmatrix}, \:\:
  l = \begin{bmatrix} 0 \\ 0 \\ 0 \end{bmatrix}, \:\:
  u = \begin{bmatrix} u^z \\ 2 \\ u^x_2-1 \end{bmatrix}.
\]
We will now discuss four scenarios that can occur depending on the values of parameters $a$, $u^x_2$ and $u^z$: (i) optimality, (ii) primal infeasibility, (iii) dual infeasibility, and (iv) simultaneous primal and dual infeasibility, and will show that Algorithm~\ref{alg:dra-hsde} correctly produces certificates for all four scenarios.
In all cases, we set the initial iterate $(p^1,s^1,r^1) = (0,0,1)$.

\subsection{Optimality}

\begin{figure}[t]
  \centering
  \begin{center}
    \begin{tikzpicture}
      \begin{axis}[
        width=\textwidth,
        height=0.5\textwidth,
        grid=both,
        grid style={line width=.1pt, draw=gray!30},
        major grid style={line width=.2pt,draw=gray!60},
        xlabel={Iteration $k$},
        ymin = 1e-10,
        ymax = 1e2,
        xmin = 0,
        xmax = 101,
        xtick distance = 20,
        ymode=log,
        legend entries={$r_\text{prim}^k$, $r_\text{dual}^k$, $r_\text{gap}^k$},
        legend cell align=left,
        legend style={at={(.975,.975)}, nodes={scale=1}, anchor=north east, fill=white, fill opacity=1, draw opacity=1,text opacity=1},
        every axis plot/.append style={ultra thick}
      ]
        \addplot [blue]       table[x=iter, y=pres, col sep=comma] {df_opt.csv};
        \addplot [red,dashed] table[x=iter, y=dres, col sep=comma] {df_opt.csv};
        \addplot [brown,dashdotted] table[x=iter, y=gap, col sep=comma] {df_opt.csv};
      \end{axis}
		\end{tikzpicture}
    \end{center}
	\caption{Convergence of optimality residuals for problem~\eqref{eqn:parametric-qp} with $a=1$, $u^x_2=3$, $u^z=5$.}
	\label{fig:qp-opt:res}
\end{figure}

Consider problem~\eqref{eqn:parametric-qp} with parameters
\[
  a = 1, \quad
  u^x_2 = 3, \quad
  u^z = 5.
\]
When $\tilde{\tau}^k>0$, we define our candidate solution as:
\begin{align*}
  x_\text{opt}^k &= \tilde{x}^k/\tilde{\tau}^k \\
  z_\text{opt}^k &= \tilde{z}^k/\tilde{\tau}^k \\
  w_\text{opt}^k &= (\tilde{x}^k-\tilde{p}^k)/\tilde{\tau}^k \\
  y_\text{opt}^k &= (\tilde{z}^k-\tilde{s}^k)/\tilde{\tau}^k.
\end{align*}
We know that
\begin{align*}
  z_\text{opt}^k &= \project{[l,u]}(\tilde{s}^k/\tilde{\tau}^k) \in [l,u] \\
  -w_\text{opt}^k &= \project{[-d,d]}(\tilde{p}^k/\tilde{\tau}^k) \in [-d,d] \\
  -y_\text{opt}^k &= \prox_{\support{[l,u]}}(\tilde{s}^k/\tilde{\tau}^k) \in \dom \support{[l,u]} \subseteq \polar{(\recession{[l,u]})}
\end{align*}
and based on \eqref{eqn:sqp-opt-cond} we define the following optimality residuals:
\begin{align*}
  r_\text{prim}^k &\eqdef \norm{A x_\text{opt}^k - b - z_\text{opt}^k}_\infty \\
  r_\text{dual}^k &\eqdef \norm{P x_\text{opt}^k + c - w_\text{opt}^k - A^T y_\text{opt}^k}_\infty \\
  r_\text{gap}^k &\eqdef |(x_\text{opt}^k)^T P x_\text{opt}^k + c^T x_\text{opt}^k + d^T |x_\text{opt}^k| - b^T y_\text{opt}^k + \support{[l,u]}(-y_\text{opt}^k)|.
\end{align*}
Algorithm~\ref{alg:dra-hsde} produces a sequence $\seq{x_\text{opt}^k,z_\text{opt}^k,w_\text{opt}^k,y_\text{opt}^k}$ that converges to
\begin{align*}
  x_\text{opt}^\star &= (1,3) \\
  z_\text{opt}^\star &= (4,1,3) \\
  w_\text{opt}^\star &= (-0.5,-0.5) \\
  y_\text{opt}^\star &= (0, 2.5, -0.5).
\end{align*}
We show convergence of optimality residuals in Figure~\ref{fig:qp-opt:res}.

\subsection{Primal infeasibility}\label{sec:numerics:pinf}

\begin{figure}[t]
  \centering
  \begin{center}
    \begin{tikzpicture}
      \begin{axis}[
        width=0.5\textwidth,
        height=0.5\textwidth,
        grid=both,
        grid style={line width=.1pt, draw=gray!30},
        major grid style={line width=.2pt,draw=gray!60},
        xlabel={Iteration $k$},
        xmin = 0,
        xmax = 51,
        xtick distance = 10,
        legend entries={$r_\text{pinf1}^k$},
        legend cell align=left,
        legend style={at={(.975,.975)}, nodes={scale=1}, anchor=north east, fill=white, fill opacity=1, draw opacity=1,text opacity=1},
        every axis plot/.append style={ultra thick}
		  ]
        \addplot [blue] table[x=iter, y=pinf1, col sep=comma] {df_pinf.csv};
      \end{axis}
    \end{tikzpicture}
    \hspace{0em}
    \begin{tikzpicture}
      \begin{axis}[
        width=0.5\textwidth,
        height=0.5\textwidth,
        grid=both,
        grid style={line width=.1pt, draw=gray!30},
        major grid style={line width=.2pt,draw=gray!60},
        xlabel={Iteration $k$},
        xmin = 0,
        xmax = 51,
        xtick distance = 10,
        yticklabel pos=right,
        ymode=log,
        legend entries={$r_\text{pinf2}^k$},
        legend cell align=left,
        legend style={at={(.975,.975)}, nodes={scale=1}, anchor=north east, fill=white, fill opacity=1, draw opacity=1,text opacity=1},
        every axis plot/.append style={ultra thick}
		  ]
        \addplot [blue] table[x=iter, y=pinf2, col sep=comma] {df_pinf.csv};
      \end{axis}
    \end{tikzpicture}
  \end{center}
  \caption{Convergence of primal infeasibility residuals for problem~\eqref{eqn:parametric-qp} with $a=1$, $u^x_2=3$, $u^z=0$.}
  \label{fig:qp-pinf:res}
\end{figure}

Consider problem~\eqref{eqn:parametric-qp} with parameters
\[
  a = 1, \quad
  u^x_2 = 3, \quad
  u^z = 0.
\]
We define
\[
  \delta^k \eqdef \norm{\project{\polar{(\recession{[l,u]})}}(\tilde{s}^k-\tilde{z}^k)}_\infty.
\]
When $\delta^k > 0$, we define our candidate primal infeasibility certificate as:
\[
  y_\text{pinf}^k = -\project{\polar{(\recession{[l,u]})}}(\tilde{s}^k-\tilde{z}^k) / \delta^k \in -\polar{(\recession{[l,u]})}
\]
and based on \eqref{eqn:qp-pinf-cond} we define the following primal infeasibility residuals:
\begin{align*}
  r_\text{pinf1}^k &\eqdef -b^T y_\text{pinf}^k + \support{[l,u]}(-y_\text{pinf}^k) \\
  r_\text{pinf2}^k &\eqdef \norm{A^T y_\text{pinf}^k}_\infty.
\end{align*}
Algorithm~\ref{alg:dra-hsde} produces a sequence $\seq{y_\text{pinf}^k}$ that converges to
\[
  y_\text{pinf}^\star = (-1,1,1).
\]
We show convergence of primal infeasibility residuals in Figure~\ref{fig:qp-pinf:res}.

\subsection{Dual infeasibility}\label{sec:numerics:dinf}

\begin{figure}[t]
  \centering
  \begin{center}
    \begin{tikzpicture}
      \begin{axis}[
        width=0.5\textwidth,
        height=0.5\textwidth,
        grid=both,
        grid style={line width=.1pt, draw=gray!30},
        major grid style={line width=.2pt,draw=gray!60},
        xlabel={Iteration $k$},
        xmin = 0,
        xmax = 51,
        legend entries={$r_\text{dinf1}^k$},
        legend cell align=left,
        legend style={at={(.975,.975)}, nodes={scale=1}, anchor=north east, fill=white, fill opacity=1, draw opacity=1,text opacity=1},
        every axis plot/.append style={ultra thick}
		  ]
        \addplot [blue] table[x=iter, y=dinf1, col sep=comma] {df_dinf.csv};
      \end{axis}
    \end{tikzpicture}
    \hspace{0em}
    \begin{tikzpicture}
      \begin{axis}[
        width=0.5\textwidth,
        height=0.5\textwidth,
        grid=both,
        grid style={line width=.1pt, draw=gray!30},
        major grid style={line width=.2pt,draw=gray!60},
        xlabel={Iteration $k$},
        xmin = 0,
        xmax = 51,
        yticklabel pos=right,
        ymode=log,
        legend entries={$r_\text{dinf2}^k$, $r_\text{dinf3}^k$},
        legend cell align=left,
        legend style={at={(.975,.975)}, nodes={scale=1}, anchor=north east, fill=white, fill opacity=1, draw opacity=1,text opacity=1},
        every axis plot/.append style={ultra thick}
		  ]
        \addplot [blue] table[x=iter, y=dinf2, col sep=comma] {df_dinf.csv};
        \addplot [red, dashed] table[x=iter, y=dinf3, col sep=comma] {df_dinf.csv};
      \end{axis}
    \end{tikzpicture}
  \end{center}
  \caption{Convergence of dual infeasibility residuals for problem~\eqref{eqn:parametric-qp} with $a=0$, $u^x_2=+\infty$, $u^z=2$.}
  \label{fig:qp-dinf:res}
\end{figure}

Consider problem~\eqref{eqn:parametric-qp} with parameters
\[
  a = 0, \quad
  u^x_2 = +\infty, \quad
  u^z = 2.
\]
We define
\[
  \gamma^k \eqdef \max(
    \norm{\tilde{x}^k}_\infty,
    \norm{\project{\recession{[l,u]}}(\tilde{z}^k)}_\infty
  ).
\]
When $\gamma^k > 0$, we define our candidate dual infeasibility certificate as:
\begin{align*}
  x_\text{dinf}^k &= \tilde{x}^k / \gamma^k \\
  s_\text{dinf}^k &= \project{\recession{[l,u]}}(\tilde{z}^k) / \gamma^k \in \recession{[l,u]}
\end{align*}
and based on \eqref{eqn:qp-dinf-cond} we define the following dual infeasibility residuals:
\begin{align*}
  r_\text{dinf1}^k &\eqdef c^T x_\text{dinf}^k + d^T |x_\text{dinf}^k| \\
  r_\text{dinf2}^k &\eqdef \norm{P x_\text{dinf}^k}_\infty \\
  r_\text{dinf3}^k &\eqdef \norm{A x_\text{dinf}^k - z_\text{dinf}^k}_\infty.
\end{align*}
Algorithm~\ref{alg:dra-hsde} produces a sequence $\seq{x_\text{dinf}^k,z_\text{dinf}^k}$ that converges to
\begin{align*}
  x_\text{dinf}^\star &= (0, 1) \\
  z_\text{dinf}^\star &= (0, 0, 1).
\end{align*}
We show convergence of dual infeasibility residuals in Figure~\ref{fig:qp-dinf:res}.

\subsection{Simultaneous primal and dual infeasibility}

\begin{figure}[t]
  \centering
  \begin{center}
    \begin{tikzpicture}
      \begin{axis}[
        width=0.5\textwidth,
        height=0.5\textwidth,
        grid=both,
        grid style={line width=.1pt, draw=gray!30},
        major grid style={line width=.2pt,draw=gray!60},
        xlabel={Iteration $k$},
        xmin = 0,
        xmax = 51,
        legend entries={$r_\text{pinf1}^k$, $r_\text{dinf1}^k$},
        legend cell align=left,
        legend style={at={(.975,.975)}, nodes={scale=1}, anchor=north east, fill=white, fill opacity=1, draw opacity=1,text opacity=1},
        every axis plot/.append style={ultra thick}
		  ]
        \addplot [blue] table[x=iter, y=pinf1, col sep=comma] {df_pdinf.csv};
        \addplot [red, dashed] table[x=iter, y=dinf1, col sep=comma] {df_pdinf.csv};
      \end{axis}
    \end{tikzpicture}
    \hspace{0em}
    \begin{tikzpicture}
      \begin{axis}[
        width=0.5\textwidth,
        height=0.5\textwidth,
        grid=both,
        grid style={line width=.1pt, draw=gray!30},
        major grid style={line width=.2pt,draw=gray!60},
        xlabel={Iteration $k$},
        xmin = 0,
        xmax = 51,
        yticklabel pos=right,
        ymode=log,
        legend entries={$r_\text{pinf2}^k$, $r_\text{dinf2}^k$, $r_\text{dinf3}^k$},
        legend cell align=left,
        legend style={at={(.975,.975)}, nodes={scale=1}, anchor=north east, fill=white, fill opacity=1, draw opacity=1,text opacity=1},
        every axis plot/.append style={ultra thick}
		  ]
        \addplot [blue] table[x=iter, y=pinf2, col sep=comma] {df_pdinf.csv};
        \addplot [red, dashed] table[x=iter, y=dinf2, col sep=comma] {df_pdinf.csv};
        \addplot [brown, dashdotted] table[x=iter, y=dinf3, col sep=comma] {df_pdinf.csv};
      \end{axis}
    \end{tikzpicture}
  \end{center}
  \caption{Convergence of primal and dual infeasibility residuals for problem~\eqref{eqn:parametric-qp} with $a=0$, $u^x_2=+\infty$, $u^z=0$.}
  \label{fig:qp-pdinf:res}
\end{figure}

Consider problem~\eqref{eqn:parametric-qp} with parameters
\[
  a = 0, \quad
  u^x_2 = +\infty, \quad
  u^z = 0.
\]
We define candidates for primal and dual infeasibility, as well as the primal and dual infeasibility residuals in the same way as in \S\ref{sec:numerics:pinf} and \S\ref{sec:numerics:dinf}.
Algorithm~\ref{alg:dra-hsde} produces a sequence $\seq{y_\text{pinf}^k,x_\text{dinf}^k,z_\text{dinf}^k}$ that converges to
\begin{align*}
  y_\text{pinf}^\star &= (-1,1,0) \\
  x_\text{dinf}^\star &= (0,1) \\
  z_\text{dinf}^\star &= (0,0,1).
\end{align*}
We show convergence of infeasibility residuals in Figure~\ref{fig:qp-pdinf:res}.

\appendices

\newpage
\section{Supporting Results}

\begin{lemma}\label{lem:FG_conjugate}
Let $f$ and $g$ be functions in $\Gamma_0(\mcf{H})$, and let $F\colon\mcf{H}\times\mcf{H} \to \left]-\infty,+\infty\right]$ and $G\colon\mcf{H}\times\mcf{H} \to \left]-\infty,+\infty\right]$ be given by
\begin{align*}
  F(x,\nu) &\coloneqq f(x) + f^*(\nu) \\
  G(x,\nu) &\coloneqq g(x) + g^*(-\nu).
\end{align*}
Then their Fenchel conjugates $F^*\colon\mcf{H}\times\mcf{H} \to \left]-\infty,+\infty\right]$ and $G^*\colon\mcf{H}\times\mcf{H} \to \left]-\infty,+\infty\right]$ are given by
\begin{align*}
  F^*(\lambda,z) &= F(z,\lambda) \\
  G^*(\lambda,z) &= G(-z,-\lambda).
\end{align*}
\end{lemma}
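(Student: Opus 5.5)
The argument is a direct computation from the definition of the Fenchel conjugate on the product space $\mcf{H}\times\mcf{H}$, equipped with the inner product $\innerprod{(x,\nu)}{(\lambda,z)} = \innerprod{x}{\lambda} + \innerprod{\nu}{z}$. The key observation is that $F$ is \emph{separable}: it is a sum of a function of $x$ alone and a function of $\nu$ alone. The conjugate of a separable sum is the separable sum of the conjugates, since the supremum over the product splits into two independent suprema.

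For $F$ we write
\[
  F^*(\lambda,z) = \sup_{x,\nu}\big( \innerprod{x}{\lambda} + \innerprod{\nu}{z} - f(x) - f^*(\nu) \big) = f^*(\lambda) + f^{**}(z).
\]
This splitting is valid because $f$ and $f^*$ are both proper, so neither supremum is $-\infty$, and no $\infty-\infty$ ambiguity arises. Since $f\in\Gamma_0(\mcf{H})$, the Fenchel--Moreau theorem \cite[\Thm~13.37]{Bauschke:2017:book} gives $f^{**}=f$. Hence $F^*(\lambda,z) = f(z) + f^*(\lambda) = F(z,\lambda)$.

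For $G$ the same splitting gives
\[
  G^*(\lambda,z) = g^*(\lambda) + h^*(z), \quad\text{where } h(\nu) \eqdef g^*(-\nu).
\]
To compute $h^*$, we use the substitution $\nu\mapsto-\nu$:
\[
  h^*(z) = \sup_\nu\big(\innerprod{\nu}{z} - g^*(-\nu)\big) = \sup_\mu\big(\innerprod{\mu}{-z} - g^*(\mu)\big) = g^{**}(-z) = g(-z).
\]
Therefore
\[
  G^*(\lambda,z) = g(-z) + g^*(\lambda) = g(-z) + g^*(-(-\lambda)) = G(-z,-\lambda).
\]

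The only point needing care is justifying the split of the supremum together with the use of Fenchel--Moreau. Properness of $f$, $f^*$, $g$, and $g^*$, all of which follow from membership in $\Gamma_0(\mcf{H})$, handles the split. Lower semicontinuity and convexity are exactly what Fenchel--Moreau requires. No further obstacle is expected.
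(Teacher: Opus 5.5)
Your proof is correct and follows the same route as the paper: split the supremum over the product space into two independent suprema, then apply biconjugation $f^{**}=f$ and $g^{**}=g$, using the substitution $\nu\mapsto-\nu$ for the second term of $G$. The paper leaves implicit two points you make explicit: the properness argument that justifies splitting the supremum, and the appeal to Fenchel--Moreau.
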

\begin{proof}
Using the definition of Fenchel conjugate function, we can write
\begin{align*}
  F^*(\lambda,z) &= \sup_{(x,\nu)} \left( \innerprod{(x,\nu)}{(\lambda,z)} - f(x) - f^*(\nu) \right) \\
  &= \sup_x \left( \innerprod{x}{\lambda} - f(x) \right)
  + \sup_\nu \left( \innerprod{\nu}{z} - f^*(\nu) \right) \\
  &= f^*(\lambda) + f(z) \\
  &= F(z,\lambda).
\end{align*}
Similarly,
\begin{align*}
  G^*(\lambda,z) &= \sup_{(x,\nu)} \left( \innerprod{(x,\nu)}{(\lambda,z)} - g(x) - g^*(-\nu) \right) \\
  &= \sup_x \left( \innerprod{x}{\lambda} - g(x) \right)
  + \sup_\nu \left( \innerprod{-\nu}{-z} - g^*(-\nu) \right) \\
  &= g^*(\lambda) + g(-z) \\
  &= G(-z,-\lambda).
  \qedhere
\end{align*}
\end{proof}

\begin{lemma}\label{lem:quad-affine-conj-rec}
Let $f\in\Gamma_0(\mcf{H}_1\times\mcf{H}_2)$ be given by
\[
  f(x,z) = \half \innerprod{x}{Px} + \innerprod{c}{x} + \indicator{Ax+z=b}(x,z),
\]
where $P\colon\mcf{H}_1\to\mcf{H}_1$ is a monotone self-adjoint bounded linear operator, $c\in\mcf{H}_1$, $A\colon\mcf{H}_1\to\mcf{H}_2$ a bounded linear operator, and $b\in\mcf{H}_2$.
We assume that $\range{P}$ and $\range{A}$ are closed.
Then
\begin{align*}
  f^*(w,y) &= \half \innerprod{w-A^*y-c}{P^\dagger (w-A^*y-c)} + \indicator{\range{P}}(w-A^*y-c) + \innerprod{b}{y} \\
  (\recession{f})(\bar{x},\bar{z}) &= \innerprod{c}{\bar{x}} + \indicator{\ker{P}}(\bar{x}) + \indicator{Ax+z=0}(\bar{x},\bar{z}) \\
  (\recession{f^*})(\bar{w},\bar{y}) &= \innerprod{b}{\bar{y}} + \indicator{\{0\}}(\bar{w}-A^*\bar{y}),
\end{align*}
where $P^\dagger$ is the Moore-Penrose inverse of $P$.
Moreover,
\begin{align*}
  \closure{\dom} f &= \lbrace (x,z)\in\mcf{H}_1\times\mcf{H}_2 \mid Ax + z = b \rbrace \\
  \recession{(\closure{\dom}f)} &= \lbrace (\bar{x},\bar{z})\in\mcf{H}_1\times\mcf{H}_2 \mid A\bar{x} + \bar{z} = 0 \rbrace \\
  \closure{\dom}(\recession{f}) &= \lbrace (\bar{x},\bar{z})\in\mcf{H}_1\times\mcf{H}_2 \mid P\bar{x} = 0, \, A\bar{x} + \bar{z} = 0 \rbrace \\
  \closure{\dom} f^* &= \lbrace (w,y)\in\mcf{H}_1\times\mcf{H}_2 \mid w-A^*y-c \in \range P \rbrace \\
  \recession{(\closure{\dom}f^*)} &= \lbrace (\bar{w},\bar{y})\in\mcf{H}_1\times\mcf{H}_2 \mid \bar{w}-A^*\bar{y} \in \range P \rbrace \\
  \closure{\dom}(\recession{f^*}) &= \lbrace (\bar{w},\bar{y})\in\mcf{H}_1\times\mcf{H}_2 \mid \bar{w}-A^*\bar{y} = 0 \rbrace.
  \qedhere
\end{align*}
\end{lemma}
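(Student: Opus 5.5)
The plan is to compute $f^*$ directly from the definition, then read off the domains and recession objects. Write
\[
f^*(w,y)=\sup_{x}\Big(\innerprod{w}{x}+\innerprod{y}{b-Ax}-\half\innerprod{x}{Px}-\innerprod{c}{x}\Big)
\]
after substituting $z=b-Ax$, which is forced by the indicator. This equals
\[
\innerprod{b}{y}+\sup_x\Big(\innerprod{w-A^*y-c}{x}-\half\innerprod{x}{Px}\Big).
\]
The remaining supremum is the conjugate of the quadratic $q(x)=\half\innerprod{x}{Px}$ evaluated at $v=w-A^*y-c$. Since $P$ is monotone, self-adjoint and has closed range, $\mcf{H}_1=\range P\oplus\ker P$ orthogonally. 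If $v$ has a nonzero component in $\ker P$, the supremum is $+\infty$: take $x=t\project{\ker P}v$ and let $t\to\infty$. If $v\in\range P$, the maximizer is $x=P^\dagger v$ and the value is $\half\innerprod{v}{P^\dagger v}$. This yields the formula for $f^*$.

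For the recession functions I will use the standard identity $\recession{h}=\support{\dom h^*}$ for $h\in\Gamma_0$ (\cite{Bauschke:2017:book}), together with the fact that recession functions are unaffected by the choice of base point. The definition gives $(\recession f)(\bar x,\bar z)=\sup_{(x,z)\in\dom f}(f(x+\bar x,z+\bar z)-f(x,z))$. This is $+\infty$ unless $A\bar x+\bar z=0$. When that holds, it equals
\[
\sup_x\big(\innerprod{Px+c}{\bar x}+\half\innerprod{\bar x}{P\bar x}\big).
\]
Because $\range P$ is dense in $(\ker P)^\perp$, this is finite iff $\bar x\in\ker P$, in which case it equals $\innerprod{c}{\bar x}$.

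For $\recession f^*$ I will use $\recession f^*=\support{\dom f}$ with $\dom f=\{Ax+z=b\}$. The support function $\sup_x \innerprod{\bar w}{x}+\innerprod{\bar y}{b-Ax}$ equals $\innerprod{b}{\bar y}$ if $\bar w=A^*\bar y$ and $+\infty$ otherwise.

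The domain statements then follow by inspection:
\begin{itemize}
\item $\dom f$ is the closed affine set $\{Ax+z=b\}$, and its recession cone is the corresponding linear subspace $\{A\bar x+\bar z=0\}$.
\item $\dom(\recession f)$ is read off from the formula above; it is closed, being an intersection of kernels.
\item $\dom f^*=\{w-A^*y-c\in\range P\}$ is an affine preimage of the closed subspace $\range P$, so it is closed. Its recession cone is obtained by dropping $c$.
\item $\dom(\recession f^*)=\{\bar w=A^*\bar y\}$.
\end{itemize}

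The closedness of $\range A$ is not really needed for these expressions. It may be assumed for consistency with the identity $\closure{\dom}(\recession f^*)=\polar{(\recession(\closure{\dom}f))}$, which I can use as a sanity check: the polar of $\{A\bar x+\bar z=0\}$ is its orthogonal complement $\{(A^*\bar y,\bar y)\}$, matching.

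I expect the main obstacle to be rigor in infinite dimensions, namely the $\range P$ versus $(\ker P)^\perp$ distinction and the existence of the maximizer $P^\dagger v$. Both are settled by the closed-range hypothesis, which makes $P^\dagger$ bounded and $\range P=(\ker P)^\perp$. With that, the case analysis in the supremum is clean.
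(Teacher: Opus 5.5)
Your proposal is correct and follows essentially the paper's route. The paper obtains $f^*$ by citing an existing computation for $b=0$, which is the same direct conjugate calculation you carry out, and it gets both recession functions from $\recession{h}=\support{\dom h^*}$. You use that identity only for $\recession{f^*}=\support{\dom f}$ and compute $\recession{f}$ straight from the definition, which is an equally short check; your remark that closedness of $\range{A}$ is never needed is also accurate.
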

\begin{proof}
The expression for $f^*$ is a trivial extension of a result shown in \cite[\App~A]{Banjac:2021} for $b=0$.
The expressions for recession functions follow from the expressions for $\dom f$ and $\dom f^*$, and identities $\recession{f}=\support{\dom f^*}$ and $\recession{f^*}=\support{\dom f}$ \cite[\Prop~13.49]{Bauschke:2017:book}.
\end{proof}

\begin{lemma}\label{lem:set-ind-conj-rec}
Let $g\in\Gamma_0(\mcf{H}_1\times\mcf{H}_2)$ be given by
\[
  g(x,z) = \support{\setB}(x) + \indicator{\setC}(z),
\]
where $\setB\subseteq\mcf{H}_1$ and $\setC\subseteq\mcf{H}_2$ are nonempty closed convex sets.
Then
\begin{align*}
  g^*(w,y) &= \indicator{\setB}(w) + \support{\setC}(y) \\
  (\recession{g})(\bar{x},\bar{z}) &= \support{\setB}(\bar{x}) + \indicator{\recession{\setC}}(\bar{z}) \\
  (\recession{g^*})(\bar{w},\bar{y}) &= \indicator{\recession{\setB}}(\bar{w}) + \support{\setC}(\bar{y}).
\end{align*}
Moreover,
\begin{align*}
  \closure{\dom}              g &= \polar{(\recession{\setB})} \times \setC \\
  \recession{(\closure{\dom}g)} &= \polar{(\recession{\setB})} \times \recession{\setC} \\
  \closure{\dom}(\recession{g}) &= \polar{(\recession{\setB})} \times \recession{\setC} \\
  \closure{\dom} g^*              &= \setB             \times \polar{(\recession{\setC})} \\
  \recession{(\closure{\dom}g^*)} &= \recession{\setB} \times \polar{(\recession{\setC})} \\
  \closure{\dom}(\recession{g^*}) &= \recession{\setB} \times \polar{(\recession{\setC})}.
\end{align*}
\end{lemma}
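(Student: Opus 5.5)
The conjugate formula follows from separability, and the remaining identities are read off from the domains via the same recession–support identities used in the proof of Lemma~\ref{lem:quad-affine-conj-rec}.

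\textbf{Conjugate.} Since $g$ is separable in $(x,z)$, its conjugate splits:
\[
  g^*(w,y) = (\support{\setB})^*(w) + (\indicator{\setC})^*(y).
\]
Because $\setB$ is nonempty closed convex, $\indicator{\setB}\in\Gamma_0(\mcf{H}_1)$ and $\support{\setB}=(\indicator{\setB})^*$. Fenchel--Moreau then gives $(\support{\setB})^*=\indicator{\setB}$. We also have $(\indicator{\setC})^*=\support{\setC}$ by definition. Together these give the stated formula for $g^*$.

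\textbf{Domains.} For $g^*$ we have $\dom g^* = \setB\times\dom\support{\setC}$, so $\closure{\dom} g^* = \setB\times\closure{\dom}\support{\setC}$. I will invoke the standard fact (cf.\ \cite[\App~A]{Banjac:2021}) that
\[
  \closure{\dom}\support{\setC} = \polar{(\recession{\setC})}.
\]
The same fact applied to $\setB$ gives $\closure{\dom} g = \polar{(\recession{\setB})}\times\setC$.

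Next I use that the recession cone of a product is the product of the recession cones. The recession cone of a closed convex cone is the cone itself, so $\polar{(\recession{\setB})}$ and $\polar{(\recession{\setC})}$ are their own recession cones. This yields $\recession{(\closure{\dom}g)}$ and $\recession{(\closure{\dom}g^*)}$.

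\textbf{Recession functions.} I use the identities $\recession{g}=\support{\dom g^*}$ and $\recession{g^*}=\support{\dom g}$ \cite[\Prop~13.49]{Bauschke:2017:book}, exactly as in the proof of Lemma~\ref{lem:quad-affine-conj-rec}. The support function of a product is the sum of the support functions, and a support function is unchanged by taking closure. Hence
\[
  (\recession{g})(\bar x,\bar z)=\support{\setB}(\bar x)+\support{\polar{(\recession{\setC})}}(\bar z).
\]
For a closed convex cone $K$, $\support{\polar{K}}=\indicator{\polar{\polar{K}}}=\indicator{K}$ by the bipolar theorem. Applied to $K=\recession{\setC}$, this gives the stated $\recession{g}$, and the symmetric argument gives $\recession{g^*}$.

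From these formulas, $\dom(\recession{g}) = \dom\support{\setB}\times\recession{\setC}$. Taking closures and using $\closure{\dom}\support{\setB}=\polar{(\recession{\setB})}$ gives the stated $\closure{\dom}(\recession{g})$, and likewise for $\closure{\dom}(\recession{g^*})$.

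\textbf{Main obstacle.} The only nontrivial ingredient is the identity $\closure{\dom}\support{\setC}=\polar{(\recession{\setC})}$ in a general Hilbert space.

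If \cite[\App~A]{Banjac:2021} cannot be cited directly, I would prove it by two inclusions.
\begin{itemize}
  \item[$\subseteq$:] If $\support{\setC}(y)<\infty$, then $\innerprod{d}{y}\le0$ for every $d\in\recession{\setC}$. Otherwise $c+td\in\setC$ for all $t\ge0$ would make $\support{\setC}(y)$ unbounded. The polar is closed, so the inclusion passes to the closure.
  \item[$\supseteq$:] Take $(\closure{\dom}\support{\setC})^{\ominus}$ and show it lies in $\recession{\setC}$, then apply the bipolar theorem. Let $d$ satisfy $\innerprod{d}{y}\le0$ for all $y\in\dom\support{\setC}$. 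Then $\support{\setC}(y+\cdot)$-type arguments, or equivalently the identity $\recession{(\indicator{\setC})}=\support{\dom\support{\setC}}$, give $\indicator{\recession{\setC}}(d)=\support{\dom\support{\setC}}(d)\le0$. Hence $d\in\recession{\setC}$.
\end{itemize}
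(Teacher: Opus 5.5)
Your proof is correct and follows essentially the paper's route. The paper's one-line proof relies on $\indicator{\setD}^*=\support{\setD}$, ${\rm rec}\,\indicator{\setD}=\indicator{\recession{\setD}}$ and $\recession{h}=\support{\dom h^*}$, which are exactly the tools you use together with separability and Fenchel--Moreau; you additionally make explicit the identity $\closure{\dom}\support{\setC}=\polar{(\recession{\setC})}$ (from the last two facts plus the bipolar theorem), which the paper leaves implicit.
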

\begin{proof}
Results follow directly from the following well-known facts:
\begin{align*}
  \indicator{\setD}^* &= \support{\setD} \\
  \recession{\indicator{\setD}} &= \indicator{\recession{\setD}} \\
  \recession{h} &= \support{\dom h^*}.
\end{align*}
where $\setD\subseteq\mcf{H}$ is a nonempty closed convex set and $h\in\Gamma_0(\mcf{H})$.
\end{proof}

\begin{corollary}\label{cor:cone-ind-conj-rec}
Let $g\in\Gamma_0(\mcf{H}_1\times\mcf{H}_2)$ be given by
\[
  g(x,z) = \indicator{\mcf{K}}(z),
\]
where $\mcf{K}\subseteq\mcf{H}_2$ is a nonempty closed convex cone.
Then
\begin{align*}
  g^*(w,y) &= \indicator{\{0\}}(w) + \indicator{\polar{\mcf{K}}}(y) \\
  (\recession{g})(\bar{x},\bar{z}) &= \indicator{\mcf{K}}(\bar{z}) \\
  (\recession{g^*})(\bar{w},\bar{y}) &= \indicator{\{0\}}(\bar{w}) + \indicator{\polar{\mcf{K}}}(\bar{y}).
\end{align*}
Moreover,
\begin{align*}
  \closure{\dom} g &= \mcf{H}_1 \times \mcf{K} \\
  \recession{(\closure{\dom}g)} &= \mcf{H}_1 \times \mcf{K} \\
  \closure{\dom}(\recession{g}) &= \mcf{H}_1 \times \mcf{K} \\
  \closure{\dom} g^* &= \{0\} \times \polar{\mcf{K}} \\
  \recession{(\closure{\dom}g^*)} &= \{0\} \times \polar{\mcf{K}} \\
  \closure{\dom}(\recession{g^*}) &= \{0\} \times \polar{\mcf{K}}.
\end{align*}
\end{corollary}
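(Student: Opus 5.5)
The plan is to derive the corollary from Lemma~\ref{lem:set-ind-conj-rec} by specializing to $\setB=\{0\}\subseteq\mcf{H}_1$ and $\setC=\mcf{K}$, and then simplifying the resulting sets using that $\mcf{K}$ is a closed convex cone. With $\setB=\{0\}$ we have $\support{\setB}\equiv 0$, so the function in Lemma~\ref{lem:set-ind-conj-rec} reduces exactly to $g(x,z)=\indicator{\mcf{K}}(z)$. Hence every formula in that lemma applies verbatim, and the whole task is to evaluate the ingredients $\support{\{0\}}$, $\indicator{\{0\}}$, $\recession{\{0\}}$, $\polar{(\recession{\{0\}})}$, $\support{\mcf{K}}$ and $\recession{\mcf{K}}$.

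First I will record the elementary facts. For $\setB=\{0\}$: $\support{\{0\}}=0$, $\recession{\{0\}}=\{0\}$, and $\polar{\{0\}}=\mcf{H}_1$. For the cone $\mcf{K}$: $\recession{\mcf{K}}=\mcf{K}$, since $\mcf{K}+\mcf{K}\subseteq\mcf{K}$ and $0\in\mcf{K}$ gives the reverse inclusion. Also $\support{\mcf{K}}=\indicator{\polar{\mcf{K}}}$. To see this, if $y\in\polar{\mcf{K}}$ the supremum is $\le 0$ and is attained at $0$. Otherwise some $x\in\mcf{K}$ has $\innerprod{x}{y}>0$, and scaling $tx$ with $t\to\infty$ gives $+\infty$. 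Consequently $\polar{(\recession{\mcf{K}})}=\polar{\mcf{K}}$.

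Next I substitute these facts into the formulas of Lemma~\ref{lem:set-ind-conj-rec}:
\begin{itemize}
\item $g^*(w,y)=\indicator{\{0\}}(w)+\indicator{\polar{\mcf{K}}}(y)$.
\item $\recession{g}(\bar x,\bar z)=0+\indicator{\mcf{K}}(\bar z)$.
\item $\recession{g^*}(\bar w,\bar y)=\indicator{\{0\}}(\bar w)+\indicator{\polar{\mcf{K}}}(\bar y)$.
\end{itemize}
The domain statements follow in the same way. We get $\closure{\dom} g=\polar{\{0\}}\times\mcf{K}=\mcf{H}_1\times\mcf{K}$, and the recession cone and the domain of the recession function coincide with it because $\recession{\mcf{K}}=\mcf{K}$. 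On the dual side, $\{0\}\times\polar{(\recession{\mcf{K}})}=\{0\}\times\polar{\mcf{K}}$ for all three sets, using $\recession{\{0\}}=\{0\}$.

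There is no real obstacle; the only step needing care is the identity $\support{\mcf{K}}=\indicator{\polar{\mcf{K}}}$ and the matching identity $\recession{\mcf{K}}=\mcf{K}$. Both rely on the cone property and on $0\in\mcf{K}$, which holds because $\mcf{K}$ is a nonempty closed cone. A small remaining check is that $\{0\}$ is a legitimate nonempty closed convex set for Lemma~\ref{lem:set-ind-conj-rec}, which it is.
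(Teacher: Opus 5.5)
Your proposal is correct and takes the same route as the paper. The corollary is stated there without a separate proof, as the specialization of Lemma~\ref{lem:set-ind-conj-rec} to $\setB=\{0\}$ and $\setC=\mcf{K}$, with exactly the simplifications $\support{\mcf{K}}=\indicator{\polar{\mcf{K}}}$, $\recession{\mcf{K}}=\mcf{K}$ and $\polar{\{0\}}=\mcf{H}_1$ that you verify.
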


\bibliographystyle{alpha}
\bibliography{refs}

\newcommand{\etalchar}[1]{$^{#1}$}
\begin{thebibliography}{BAVV24b}

\bibitem[ADH{\etalchar{+}}24]{Applegate:2026}
D.~Applegate, M.~D\'{\i}az, O.~Hinder, H.~Lu, M.~Lubin, B.~O'Donoghue, and W.~Schudy.
\newblock {PDLP}: a practical first-order method for large-scale linear programming.
\newblock {\em Mathematical Programming Computation}, 2024.

\bibitem[ADLL24]{Applegate:2024}
D.~Applegate, M.~D\'{\i}az, H.~Lu, and M.~Lubin.
\newblock Infeasibility detection with primal-dual hybrid gradient for large-scale linear programming.
\newblock {\em SIAM Journal on Optimization}, 34(1):459--484, 2024.

\bibitem[ADV26]{cvxopt}
M.~S. Andersen, J.~Dahl, and L.~Vandenberghe.
\newblock {\em {CVXOPT}: a {P}ython package for convex optimization}, 2026.

\bibitem[AY99]{Andersen:1999}
E.~D. Andersen and Y.~Ye.
\newblock On a homogeneous algorithm for the monotone complementarity problem.
\newblock {\em Mathematical Programming}, 84(2):375--399, 1999.

\bibitem[BACS24]{Briceno-Arias:2024b}
L.~M. Brice{\~n}o-Arias, P.~L. Combettes, and F.~J. Silva.
\newblock Proximity operators of perspective functions with nonlinear scaling.
\newblock {\em SIAM Journal on Optimization}, 34(4):3212--3234, 2024.

\bibitem[Ban21]{Banjac:2021}
G.~Banjac.
\newblock On the minimal displacement vector of the {D}ouglas-{R}achford operator.
\newblock {\em Operations Research Letters}, 49(2):197--200, 2021.

\bibitem[BAVV24a]{Briceno-Arias:2024}
L.~M. Brice{\~n}o-Arias and C.~Vivar-Vargas.
\newblock Enhanced computation of the proximity operator for perspective functions.
\newblock {\em Journal of Optimization Theory and Applications}, 200(3):1078--1099, 2024.

\bibitem[BAVV24b]{Briceno-Arias:2024c}
L.~M. Brice{\~n}o-Arias and C.~Vivar-Vargas.
\newblock Projection onto cones generated by epigraphs of perspective functions.
\newblock {\em arXiv:2411.08000}, 2024.

\bibitem[BC17]{Bauschke:2017:book}
H.~H. Bauschke and P.~L. Combettes.
\newblock {\em Convex Analysis and Monotone Operator Theory in Hilbert Spaces}.
\newblock Springer International Publishing, 2nd edition, 2017.

\bibitem[BGSB19]{Banjac:2019}
G.~Banjac, P.~Goulart, B.~Stellato, and S.~Boyd.
\newblock Infeasibility detection in the alternating direction method of multipliers for convex optimization.
\newblock {\em Journal of Optimization Theory and Applications}, 183(2):490--519, 2019.

\bibitem[CM18]{Combettes:2018b}
P.~L. Combettes and C.~L. M\"uller.
\newblock Perspective functions: {P}roximal calculus and applications in high-dimensional statistics.
\newblock {\em Journal of Mathematical Analysis and Applications}, 457(2):1283--1306, 2018.

\bibitem[Com18]{Combettes:2018}
P.~L. Combettes.
\newblock Perspective functions: Properties, constructions, and examples.
\newblock {\em Set-Valued and Variational Analysis}, 26(2):247--264, 2018.

\bibitem[DB16]{Diamond:2016}
S.~Diamond and A.~Boyd.
\newblock {CVXPY}: a {P}ython-embedded modeling language for convex optimization.
\newblock {\em Journal of Machine Learning Research}, 17(83):1--5, 2016.

\bibitem[DCB13]{Domahidi:2013}
A.~Domahidi, E.~Chu, and S.~Boyd.
\newblock {ECOS}: an {SOCP} solver for embedded systems.
\newblock In {\em European Control Conference (ECC)}, 2013.

\bibitem[DHL17]{Dunning:2017}
I.~Dunning, J.~Huchette, and M.~Lubin.
\newblock {JuMP}: a modeling language for mathematical optimization.
\newblock {\em SIAM Review}, 59(2):295--320, 2017.

\bibitem[GC26]{Goulart:2026}
P.~Goulart and Y.~Chen.
\newblock Clarabel: an interior-point solver for conic programs with quadratic objectives.
\newblock {\em Mathematical Programming Computation}, 2026.

\bibitem[GCG21]{Garstka:2021}
M.~Garstka, M.~Cannon, and P.~Goulart.
\newblock {COSMO}: a conic operator splitting method for convex conic problems.
\newblock {\em Journal of Optimization Theory and Applications}, 190(3):779--810, 2021.

\bibitem[GT56]{Goldman:1956}
A.~J. Goldman and A.~W. Tucker.
\newblock Theory of linear programming.
\newblock In H.~W. Kuhn and A.~W. Tucker, editors, {\em Linear Inequalities and Related Systems}, volume~38 of {\em Annals of Mathematics Studies}, pages 53--97. Princeton University Press, 1956.

\bibitem[LM79]{Lions:1979}
P.~Lions and B.~Mercier.
\newblock Splitting algorithms for the sum of two nonlinear operators.
\newblock {\em SIAM Journal on Numerical Analysis}, 16(6):964--979, 1979.

\bibitem[LSZ00]{Luo:2000}
Z.~Q. Luo, J.~F. Sturm, and S.~Zhang.
\newblock Conic convex programming and self-dual embedding.
\newblock {\em Optimization Methods and Software}, 14(3):169--218, 2000.

\bibitem[{MOS}26]{mosek}
{MOSEK ApS}.
\newblock {\em MOSEK Optimization Suite}, 2026.

\bibitem[NN94]{Nesterov:1994}
Y.~Nesterov and A.~Nemirovski.
\newblock {\em Interior-Point Polynomial Algorithms in Convex Programming}.
\newblock Society for Industrial and Applied Mathematics, 1994.

\bibitem[NT08]{Nemirovski:2008}
A.~Nemirovski and M.~J. Todd.
\newblock Interior-point methods for optimization.
\newblock {\em Acta Numerica}, 17:191--234, 2008.

\bibitem[OCPB16]{O'Donoghue:2016}
B.~O'Donoghue, E.~Chu, N.~Parikh, and S.~Boyd.
\newblock Conic optimization via operator splitting and homogeneous self-dual embedding.
\newblock {\em Journal of Optimization Theory and Applications}, 169(3):1042--1068, 2016.

\bibitem[O'D21]{O'Donoghue:2021}
B.~O'Donoghue.
\newblock Operator splitting for a homogeneous embedding of the linear complementarity problem.
\newblock {\em SIAM Journal on Optimization}, 31(3):1999--2023, 2021.

\bibitem[Roc70]{Rockafellar:1970}
R.~T. Rockafellar.
\newblock {\em Convex Analysis}.
\newblock Princeton University Press, USA, 1970.

\bibitem[SBG{\etalchar{+}}20]{Stellato:2020}
B.~Stellato, G.~Banjac, P.~Goulart, A.~Bemporad, and S.~Boyd.
\newblock {OSQP}: an operator splitting solver for quadratic programs.
\newblock {\em Mathematical Programming Computation}, 12(4):637--672, 2020.

\bibitem[YTM94]{Ye:1994}
Y.~Ye, M.~J. Todd, and S.~Mizuno.
\newblock An {$O(\sqrt{n}L)$}-iteration homogeneous and self-dual linear programming algorithm.
\newblock {\em Mathematics of Operations Research}, 19(1):53--67, 1994.

\bibitem[Zha04]{Zhang:2004}
S.~Zhang.
\newblock A new self-dual embedding method for convex programming.
\newblock {\em Journal of Global Optimization}, 29(4):479--496, 2004.

\end{thebibliography}

\end{document}